
\documentclass[12pt]{amsart}
\usepackage{amssymb,amsmath,amscd,enumerate}
\usepackage{tikz}

\usepackage{capt-of}
\usepackage[colorlinks=true,linkcolor=blue,citecolor=blue]{hyperref}
\usepackage{graphicx}
\usepackage{hyperref}

\usepackage{float} 
\usepackage{mathrsfs}
\usetikzlibrary{arrows}
\textheight=600pt
\textwidth=435pt
\oddsidemargin=17pt
\evensidemargin=17pt

 \numberwithin{equation}{section}
 \newtheorem{thm}{\bf Theorem}[section]
 \newtheorem{lem}[thm]{\bf Lemma}
 \newtheorem{conj}[thm]{\bf Conjecture}
 \newtheorem{cor}[thm]{\bf Corollary}
 
 \theoremstyle{definition}
 \newtheorem{defn}[thm]{Definition}
 
 \newtheorem{exam}[thm]{Example}
 
 \newtheorem{rem}[thm]{Remark}
 
\newtheorem*{thm*}{Theorem}
\usepackage{color}

\DeclareMathOperator{\supp}{supp}
\DeclareMathOperator{\reg}{reg}

\DeclareMathOperator{\diam}{diam}
\DeclareMathOperator{\dist}{dist}

\DeclareMathOperator{\im}{im}

\def\m{\mathfrak m}

\def\M{\mathcal M}

\newcommand {\kk} {\Bbbk} 

\usepackage[normalem]{ulem}

\begin{document}
\title[Edge ideals of powers of graphs]{Regularity of edge ideals of powers of graphs}
\date{}

\author[My Hanh Pham]{My Hanh Pham}
\address{Faculty of Education, An Giang University\\Vietnam National University Ho Chi Minh City\\Dong Xuyen, Long Xuyen, An Giang, Vietnam}
\email{pmhanh@agu.edu.vn}

\author[Thanh Vu]{Thanh Vu}
\address{Institute of Mathematics, VAST, 18 Hoang Quoc Viet, Hanoi, Vietnam}
\email{vuqthanh@gmail.com}

\let\thefootnote\relax\footnotetext{*Corresponding Author: Thanh Vu }
\keywords{Castelnuovo--Mumford regularity, edge ideal, graph power, forest, cycle}

\subjclass[2000]{Primary: 05C70, 13D02}
 
\begin{abstract} 
We prove that the regularity of edge ideals of powers of forests is weakly decreasing. We then compute the regularity of edge ideals of powers of cycles.
 \end{abstract}

\maketitle

\section{Introduction}
  \label{sec1} 
Let $G$ be a simple graph on the vertex set $V(G) = [n] = \{1,\ldots,n\}$ and the edge set $E(G)$. The distance $\dist_G(u,v)$ between the vertices $u$ and $v$ of $G$ is the number of edges in a shortest path connecting $u$ and $v$. If no such path exists, $\dist_G(u,v)=\infty$. The diameter of $G$, denoted by $\diam(G)$, is the greatest distance between any two vertices of $G$. For a positive integer $d$, the $d$-th power of $G$, denoted by $G^d$, is a graph on the vertex set $V(G)$ and $\{u,v\}$ is an edge of $G^d$ if and only if $\dist_G(u,v) \le d$. In particular, $G^d$ is a complete graph for any $d \ge \diam(G)$. Graph power is a classical concept in graph theory \cite{BM}. It deeply connects to information theory, communication complexity, and Ramsey theory \cite{AL}. Recently, it also appears in the work of Cooper, El Khoury, Faridi, Mayes-Tang, Morey, Sega, and Spiroff \cite{CEFMMSS} where the authors studied free resolutions of powers of squarefree monomial ideals. In this work, we study the regularity of edge ideals of powers of graphs. Recall that the edge ideal of $G$ over a field $\kk$ is defined by 
$$I(G) = (x_ix_j \mid \{i,j\} \in E(G)) \subset R = \kk[x_1,\ldots,x_n].$$
We denote by $\im(G)$ the induced matching number of $G$. In Lemma \ref{ind_math_powers}, we prove that $\im(G^{d+1}) \le \im(G^d)$ for any positive integer $d$. Although the induced matching number of $G$ only gives a lower bound for the regularity of its edge ideal \cite[Lemma 2.2]{K}, based on computations, we suggest the following.
\begin{conj}\label{conj_reg} Let $G$ be a simple graph and $d$ be a positive integer. Then 
$$\reg \left ( R/I \left ( G^d \right ) \right ) \ge \reg \left ( R/I \left ( G^{d+1} \right ) \right ),$$
where $\reg$ denotes the Castelnuovo-Mumford regularity.
\end{conj}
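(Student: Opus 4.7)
My plan is to attack the conjecture by combining the already-established monotonicity of the induced matching number (Lemma \ref{ind_math_powers}), which controls the combinatorial lower bound on the regularity, with a homological comparison of $R/I(G^d)$ and $R/I(G^{d+1})$ via vertex splittings. Fix a vertex $v \in V(G)$ and consider the short exact sequence
\[
0 \to R/\bigl(I(G^{d+1}):x_v\bigr)(-1) \xrightarrow{x_v} R/I(G^{d+1}) \to R/\bigl(I(G^{d+1})+(x_v)\bigr) \to 0,
\]
together with its analogue for $G^d$. Then $I(G^{d+1})+(x_v) = I(G^{d+1}\setminus v)+(x_v)$ relates (with some care, since distances in $G\setminus v$ can strictly exceed those in $G$) to edge ideals of smaller graph powers; meanwhile $I(G^{d+1}):x_v$ is generated by the variables $x_u$ with $\dist_G(u,v)\le d+1$ together with the edges of $G^{d+1}$ not incident to $v$. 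Both sides are therefore amenable to induction on $|V(G)|$.

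I would first execute this program for forests, as announced in the abstract. For a forest $G$ with a leaf $v$ having unique neighbor $u$, the deleted graph $G\setminus v$ is again a forest, the neighborhood $N_{G^{d+1}}[v]$ is the unique maximal clique of $G^{d+1}$ containing $v$, and the balls around $v$ in $G$ and $G\setminus v$ agree away from $v$. These rigidity properties make the colon and deletion terms transparent and keep the induction honest. Applying the induction hypothesis to $G\setminus v$ gives $\reg\bigl(R/I((G\setminus v)^d)\bigr) \ge \reg\bigl(R/I((G\setminus v)^{d+1})\bigr)$, and one combines this with a careful comparison of the extra contributions of $v$ at level $d$ versus level $d+1$; the inequality $\im(G^{d+1}) \le \im(G^d)$ from Lemma \ref{ind_math_powers} is exactly what is needed to bound the colon-term contribution.

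The main obstacle for the general conjecture is that the passage from $G$ to $G^d$ can create long induced cycles, large cliques, and antiholes unpredictably, so $\reg(R/I(G^d))$ is not given by any single closed combinatorial invariant (as already visible for cycles, treated in their own section of the paper). Consequently, a purely inductive comparison must track possibly large gaps between the induced matching lower bound and the true regularity. I expect this gap-control step---showing that any jump of $\reg$ above $\im$ at level $d+1$ is already witnessed at level $d$, e.g.\ by lifting an induced subgraph realizing the regularity of $G^{d+1}$ to one inside $G^d$ with at least the same Betti contribution---to be the central difficulty, and it is likely where new ideas beyond the forest argument are required.
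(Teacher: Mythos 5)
The statement you are addressing is a Conjecture: the paper does not prove it in general, and only establishes it for forests (Theorem \ref{thm_reg_decrease_forests}) and cycles (Theorem \ref{thm_reg_main}). Your assessment that the general case requires ideas beyond the forest argument is therefore consistent with the paper. However, even restricted to forests, your proposal has a genuine gap and misses the one idea that makes the forest case easy. The paper's proof is three lines: by Laskar and Shier \cite[Corollary 2]{LS}, every power $G^d$ of a forest $G$ is chordal; by Woodroofe \cite[Theorem 14]{W}, $\reg(R/I(H)) = \im(H)$ for any chordal graph $H$; and by Lemma \ref{ind_math_powers}, $\im(G^{d+1}) \le \im(G^d)$. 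Chordality of the powers is the crucial input, and it is exactly what converts the combinatorial inequality of Lemma \ref{ind_math_powers} into the regularity inequality.

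Your alternative route via the short exact sequence and leaf deletion does not close. The decisive step --- ``one combines this with a careful comparison of the extra contributions of $v$ at level $d$ versus level $d+1$'' --- is the entire difficulty and is left unargued. Concretely, the colon ideal $I(G^{d+1}):x_v$ involves the ball of radius $d+1$ around $v$ while $I(G^{d}):x_v$ involves the ball of radius $d$, so the two sides of your comparison live on different vertex sets and the induction hypothesis on $G\setminus v$ does not directly bound either one. Moreover, the inequality $\im(G^{d+1})\le\im(G^{d})$ cannot by itself ``bound the colon-term contribution'': the induced matching number is only a lower bound for regularity in general (\cite[Lemma 2.2]{K}), so it yields an upper bound only after you know the relevant graphs are chordal --- which is the Laskar--Shier fact you never invoke. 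If you wanted to salvage your induction you would still need that fact (or an equivalent upper-bound mechanism), at which point the direct chordality argument supersedes the induction entirely.
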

In the first main result of the paper, we prove that Conjecture \ref{conj_reg} holds for forests.

\begin{thm}\label{thm_reg_decrease_forests} Let $G$ be a forest and $d$ be a positive integer. Then 
$$\reg \left ( R/I \left ( G^d \right ) \right ) \ge \reg \left ( R/I \left ( G^{d+1} \right ) \right ).$$    
\end{thm}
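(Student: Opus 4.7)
The plan is to reduce the theorem to the induced matching inequality $\im(G^{d+1}) \le \im(G^d)$ provided by Lemma \ref{ind_math_powers}. The reduction rests on establishing that, for every forest $G$, every power $G^d$ is a chordal graph, so that the formula $\reg(R/I(H)) = \im(H)$ of H\`a--Van Tuyl for chordal graphs $H$ applies to both $G^d$ and $G^{d+1}$. Granting this, the desired chain
\[
\reg\bigl(R/I(G^d)\bigr) \;=\; \im(G^d) \;\ge\; \im(G^{d+1}) \;=\; \reg\bigl(R/I(G^{d+1})\bigr)
\]
is immediate, with the middle inequality supplied by Lemma \ref{ind_math_powers}.

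The heart of the argument is therefore the graph-theoretic claim that $G^d$ is chordal whenever $G$ is a forest. I would argue by contradiction: suppose $C = v_0 v_1 \cdots v_{k-1} v_0$ is an induced cycle of length $k \ge 4$ in $G^d$. Each edge $v_i v_{i+1}$ of $C$ yields $\dist_G(v_i, v_{i+1}) \le d$, while the absence of chords forces $\dist_G(v_i, v_j) > d$ for every non-consecutive pair. For $k = 4$, the classical four-point condition for tree metrics yields
\[
\dist_G(v_0,v_2) + \dist_G(v_1,v_3) \;\le\; \max\bigl\{\dist_G(v_0,v_1)+\dist_G(v_2,v_3),\ \dist_G(v_0,v_3)+\dist_G(v_1,v_2)\bigr\} \;\le\; 2d,
\]
which contradicts the non-chord bound forcing the left side to exceed $2d$. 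For $k \ge 5$, I would proceed by letting $T$ be the subtree of $G$ spanned by the shortest $G$-paths $P_0, \ldots, P_{k-1}$ connecting consecutive $v_i$'s, picking a leaf of $T$, and showing that two consecutive $P_i$'s must overlap along a long common subpath; this overlap is precisely what produces a chord $v_{i-1} v_{i+1}$ of $C$.

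The main obstacle is making the case $k \ge 5$ watertight: the $P_i$'s together form a closed walk in the tree $T$, and one must carefully bookkeep the overlaps to extract two non-adjacent vertices of $C$ lying at $G$-distance at most $d$. This step can be bypassed by quoting Raychaudhuri's theorem that every power of a strongly chordal graph is strongly chordal, together with the classical fact that forests are strongly chordal; this delivers chordality of $G^d$ with no case analysis. Once chordality is in hand, the proof reduces to a single invocation of H\`a--Van Tuyl combined with Lemma \ref{ind_math_powers}.
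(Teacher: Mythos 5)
Your proposal is correct and follows essentially the same route as the paper: chordality of $G^d$ for a forest $G$, the identity $\reg(R/I(H)) = \im(H)$ for chordal graphs $H$, and Lemma \ref{ind_math_powers}. The paper simply cites Laskar--Shier \cite{LS} for the chordality of powers of trees (and Woodroofe \cite{W} for the regularity formula), so your direct case analysis for induced cycles of length at least $5$ --- the one incomplete part of your sketch --- is unnecessary once such a reference is invoked.
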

If $G = G_1 \cup G_2 \cup \cdots \cup G_t$ where $G_i$ are the connected components of $G$ then $G^d = G_1^d \cup G_2^d \cup \cdots \cup G_t^d$. By the results of Nguyen and Vu \cite{NV}, we may assume that $G$ is connected when studying the homological invariants of powers of $I(G^d)$. When $d \ge 2$, $G^d$ contains many triangles and is generally more complicated than $G$. Fortunately, when $G$ is a tree, by the result of Laskar and Shier \cite{LS}, $G^d$ is chordal for all $d \ge 1$. The conclusion then follows from Lemma \ref{ind_math_powers} and \cite[Theorem 14]{W}.

On the other hand, the power operation preserves symmetries of $G$, e.g. if $G$ is circulant then its powers are circulant. Our next result shows that Conjecture \ref{conj_reg} also holds for cycles. 
 
\begin{thm}\label{thm_reg_main} Let $n,d$ be positive integers such that $n \ge 3$. Then
$$\reg \left ( R / I(C_n^d) \right ) = \begin{cases} 1 & \text{ if } n \le 2d+2,\\
    2 & \text { if } n = 2d + 3, \\
    \left \lfloor \frac{n}{d+2} \right \rfloor & \text { otherwise},
\end{cases}$$
where $C_n$ denotes a cycle on $n$ vertices.
\end{thm}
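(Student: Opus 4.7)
The plan is to handle each of the three cases separately, with the lower bound in the generic case coming from the induced matching number of $C_n^d$ and the matching upper bound from induction on $n$.

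For Cases 1 and 2 (small $n$), I would analyze the complement graph $\overline{C_n^d}$ and apply Fr\"oberg's theorem. When $n \le 2d+1$ the diameter of $C_n$ is at most $d$, so $C_n^d = K_n$ and $\reg(R/I(K_n)) = 1$. When $n = 2d+2$ the non-edges of $C_n^d$ are exactly the $d+1$ antipodal pairs, which form a perfect matching; this complement is chordal, giving $\reg = 1$. When $n = 2d+3$, the observation $\gcd(d+1,\,2d+3) = 1$ shows that the $n$ non-edges trace out a single odd $n$-cycle in $\overline{C_n^d}$, so this complement is not chordal and $\reg \ge 2$. For the matching upper bound $\reg \le 2$, I would apply the short exact sequence for a variable $x_v$: the quotient $R/(I(C_n^d),x_v)$ corresponds to an induced subgraph on $n-1 = 2d+2$ vertices that falls into the Case~1 regime, and the colon $(I(C_n^d):x_v)$ collapses the ring to only a handful of surviving variables, both with regularity at most $1$.

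For the main case $n \ge 2d+4$, the lower bound comes from the induced matching given by the edges $\{i(d+2)+1,\,i(d+2)+2\}$ for $0 \le i < \lfloor n/(d+2)\rfloor$: consecutive matched pairs leave a gap of $d$ unused vertices in $C_n$, so all cross-distances exceed $d$, and combining with the standard inequality $\reg(R/I(G)) \ge \im(G)$ yields $\reg(R/I(C_n^d)) \ge \lfloor n/(d+2)\rfloor$. For the matching upper bound, I would induct on $n$ via the short exact sequence
\begin{equation*}
0 \to (R/(I(G):x_v))(-1) \xrightarrow{\,\cdot x_v\,} R/I(G) \to R/(I(G), x_v) \to 0,
\end{equation*}
where $G = C_n^d$ and $v$ is a carefully chosen vertex. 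The colon ideal factors as $(I(G):x_v) = (x_j : j \in N_G(v)) + I(G \setminus N_G[v])$; since $G \setminus N_G[v] = P_{n-2d-1}^d$ is a path power, hence chordal by Laskar--Shier, its regularity equals $\im(P_{n-2d-1}^d) = \lfloor (n-d-1)/(d+2)\rfloor$, contributing $\lfloor(n-d-1)/(d+2)\rfloor + 1$ to the maximum.

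The principal difficulty is bounding the regularity of the second quotient $R/(I(G),x_v)$, whose underlying graph $C_n^d \setminus \{v\}$ is $P_{n-1}^d$ together with up to $\binom{d}{2}$ additional wrap-around edges joining the two ends of the path through the removed vertex; this graph is neither a cycle power nor a path power. My intended strategy is to iterate the short exact sequence, peeling off a consecutive block of $d+2$ vertices, so that after enough steps the remaining induced subgraph of $C_n^d$ becomes isomorphic to the smaller cycle power $C_{n-(d+2)}^d$, to which the inductive hypothesis gives regularity $\lfloor n/(d+2)\rfloor - 1$; each intermediate step contributes at most a path-power regularity to the running maximum. The most delicate residue class is $n \equiv d+1 \pmod{d+2}$, where the naive one-step colon-ideal bound already exceeds $\lfloor n/(d+2)\rfloor$ by one and so demands an additional refinement---likely an explicit computation of $\h_*$ on specific induced subcomplexes of $\Ind(C_n^d)$ via Hochster's formula, exploiting the particular combinatorics of circulant graphs to cancel the extra contribution.
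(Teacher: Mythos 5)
Your treatment of the small cases ($n\le 2d+3$, via Fr\"oberg's theorem applied to the complement) and your lower bound via the induced matching $\{i(d+2)+1,\,i(d+2)+2\}$ are both correct, and for $n=k(d+2)+r$ with $0\le r\le d$ your plan of iterating the deletion/colon exact sequence is essentially the paper's argument: each colon ideal $(I(G):x_v)$ is governed by $P_{n-2d-1}^d$, whose regularity is $\lfloor (n-d-1)/(d+2)\rfloor = k-1$ in these residue classes, and after deleting $d$ consecutive vertices the deletion term becomes $P_{n-d}^d$ with regularity $k$. One factual slip: deleting a consecutive block of vertices from $C_n^d$ never produces a smaller cycle power $C_{n-(d+2)}^d$ --- the two ends of the gap remain at $C_n$-distance $>d$, so you land on a \emph{path} power $P_{n-d}^d$ (this is what actually terminates the induction, so the error is harmless here, but the induction ``on $n$ through smaller cycle powers'' as you describe it is not available).

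The genuine gap is the critical residue $n=k(d+2)+d+1$, which is the heart of the theorem. You correctly identify that there the one-step colon bound gives $\lfloor(n-d-1)/(d+2)\rfloor+1=k+1$ instead of $k$, but you offer no argument to close the gap beyond ``likely an explicit computation of $\h_*$ via Hochster's formula.'' That is not a proof, and it is precisely where all the work lies: the paper spends six lemmas on this case, writing $I=J+x_1K$ with $J=I(G\setminus 1)$ and $K$ the linear ideal on $N_G(1)$, explicitly computing $J\cap K = L + x_2M$ and then $L\cap M$, and bounding $\reg(L)$, $\reg(M)$, and $\reg(L\cap M)$ by carefully chosen variable-peeling sequences that reduce each piece to chordal graphs whose induced matching numbers can be controlled, before assembling everything with the inequality $\reg(I+J)\le\max\{\reg I,\reg J,\reg(I\cap J)-1\}$. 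Without a concrete mechanism for cancelling the extra $+1$ in this residue class, your proposal does not establish the theorem in the case that actually requires a new idea.
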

This result confirms the computational experiments done by Romeo \cite{Ro}. In the cases where $n \le 2d+3$ the results are essentially known. In the cases where $n = k(d+2) + r$ for $0 \le r \le d$, we drop the vertices $1, 2, \ldots, d$ sequentially to get to the case of powers of paths. The critical case where $n = k(d+2) + d+1$ is much more subtle. The basic idea is simple, though. To establish the inequality $\reg (J) \le k+1$ for a monomial ideal $J$, we use one of the following strategies. 
\begin{enumerate}
    \item  Express $J = K + x L$ for some variable $x$, then establish that $\reg (K), \reg (K\cap L) \le k+1$ and $\reg (L) \le k$.
    \item Choose a sequence of variables $y_1,\ldots, y_{\ell}$ so that $J + (y_1, \ldots, y_\ell)$ is well understood and have regularity at most $k+1$ and $(J + (y_1, \ldots, y_{j-1}) ) : y_j$ is well understood and have regularity at most $k$ for all $j = 1, \ldots, \ell$.
\end{enumerate}
We refer to Section \ref{sec3} for more details. 

In Section \ref{sec2}, we recall the notion of graph powers and prove that the induced matching numbers of powers of graphs are weakly decreasing. We then deduce Theorem \ref{thm_reg_decrease_forests}. In Section \ref{sec3}, we establish Theorem \ref{thm_reg_main}.

\section{Powers of graphs}\label{sec2}

 \subsection{Graph} 

 \begin{defn} Let $G$ be a simple graph with the vertex set $V(G) = \{x_1,\ldots,x_n\}$ and the edge set $E(G)$. 

\begin{enumerate}
    \item The neighborhood of a vertex $x \in V(G)$ is $N_G(x)=\{y\in V(G) \mid \{x,y\}\in E(G)\}$. The closed neighborhood of $x$ is $N_G[x]=N_G(x)\cup\{x\}$. 
    \item For a subset $U \subset V(G)$, the neighborhood and closed neighborhood of $U$ in $G$ are defined respectively by $N_G(U) = \cup \left ( N_G(x) \mid x \in U \right )$ and $N_G[U] = \cup \left ( N_G[x] \mid x \in U \right )$. 
    \item A simple graph $H$ is a subgraph of $G$ if $V(H) \subseteq V(G)$ and $E(H) \subseteq E(G)$. $H$ is an induced subgraph of $G$ if it is a subgraph of $G$ and $E(H) = E(G) \cap V(H) \times V(H)$.
    \item For a subset $U \subset V(G)$, $G[U]$ and $G\backslash U$ denote the induced subgraph of $G$ on $U$ and on $V(G) \setminus U$ respectively.
    \item A path $P_n$ on $n$ vertices is the graph on $V(P_n) = \{x_1, \ldots, x_n\}$ with the edge set $E(P_n) = \{ \{x_1,x_2\},\ldots, \{x_{n-1},x_n\} \}.$
    \item A cycle $C_n$ on $n$ vertices is the graph on $V(C_n) = \{x_1,\ldots,x_n\}$ with the edge set $E(C_n) = E(P_n) \cup \{ \{x_1,x_n\} \}$.
    \item A forest is a graph without any cycles. A tree is a connected forest.
    \item $G$ is called chordal if it has no induced cycles of length at least four.
    \item A subset $M \subseteq E(G)$ is called a matching of $G$ if no two edges in $M$ share a common vertex. It is an \textit{induced matching} if $M$ forms an induced subgraph of $G$. \textit{Induced matching number} of $G$, denoted by $\im(G)$, is the largest size of an induced matching of $G$.
\end{enumerate}
\end{defn}

The induced matching number of the $d$-th power of an $n$-cycle was given in \cite[Corollary 3.4]{Ro}. We now compute the induced matching number of the $d$-th power of a path on $n$ vertices.


\begin{lem}\label{lem_im_powers_paths} Let $n,d$ be positive integers such that $n \ge 2$. Then 
$$\im(P_n^d) = \left \lfloor \frac{n+d}{d+2} \right \rfloor.$$    
\end{lem}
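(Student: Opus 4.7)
The plan is to translate the induced matching problem into a combinatorial optimization on integer positions. An edge of $P_n^d$ is a pair $\{x_i,x_j\}$ with $1 \le i < j \le n$ and $j-i \le d$. Given two disjoint edges $\{x_a,x_b\}$ and $\{x_c,x_d\}$ with $a<b$, $c<d$, and (after relabeling) $b<c$, they form an induced subgraph of $P_n^d$ isomorphic to $2K_2$ precisely when $c-b \ge d+1$. Hence an induced matching of size $k$ in $P_n^d$ corresponds to integers
\[
1 \le a_1 < b_1 < a_2 < b_2 < \cdots < a_k < b_k \le n
\]
with $b_i - a_i \le d$ for all $i$ and $a_{i+1} - b_i \ge d+1$ for $i=1,\ldots,k-1$.

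For the lower bound, I would exhibit an explicit matching. Set $k = \lfloor (n+d)/(d+2) \rfloor$ and take the edges $e_i = \{x_{1+(i-1)(d+2)},\, x_{2+(i-1)(d+2)}\}$ for $i=1,\ldots,k$. Each $e_i$ has length $1 \le d$, so it is an edge of $P_n^d$, and consecutive edges are separated by the gap $a_{i+1} - b_i = d+1$, verifying the induced condition. The only thing to check is $b_k = 2+(k-1)(d+2) \le n$, which rearranges to $k \le (n+d)/(d+2)$ and holds by the choice of $k$.

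For the upper bound, I would use a telescoping span argument. From the inequalities above,
\[
b_k - a_1 \;=\; \sum_{i=1}^{k}(b_i - a_i) + \sum_{i=1}^{k-1}(a_{i+1} - b_i) \;\ge\; k\cdot 1 + (k-1)(d+1) \;=\; k(d+2) - (d+1).
\]
Combining with $a_1 \ge 1$ and $b_k \le n$ gives $n - 1 \ge k(d+2) - (d+1)$, i.e. $k \le (n+d)/(d+2)$, so $k \le \lfloor (n+d)/(d+2)\rfloor$.

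The argument is largely routine once the combinatorial reformulation is in place; the only subtlety is in getting the span inequality tight, where the contribution $+1$ from each edge length and $+(d+1)$ from each inter-edge gap must be bounded from below simultaneously. I expect no serious obstacle, and both bounds are witnessed by the same length-one, gap-$(d+1)$ configuration used in the construction.
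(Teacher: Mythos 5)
Your proof is correct, and your upper bound takes a genuinely different route from the paper's. The combinatorial reformulation is sound: for two disjoint edges $\{x_a,x_b\}$, $\{x_c,x_{d'}\}$ of $P_n^d$ with $a<b$, $c<d'$, $a<c$, the induced condition forces $c>b$ (otherwise $c-a<b-a\le d$ and $\{x_a,x_c\}$ would be an edge joining the two matching edges), so every induced matching really does linearize as $a_1<b_1<\cdots<a_k<b_k$ with $b_i-a_i\le d$ and $a_{i+1}-b_i\ge d+1$; you assert this non-interleaving only implicitly via ``after relabeling,'' and it deserves the one-line justification above, but it is not a gap of substance. Your lower-bound construction is exactly the paper's (consecutive pairs spaced $d+2$ apart). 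For the upper bound, the paper inducts on $n$: if vertex $1$ is unmatched it passes to $P_{n-1}^d$, and if $\{1,i\}$ is in the matching with $i\le d+1$ it deletes that edge together with the blocked vertices and passes to $P_{n-i-d}^d$, gaining one edge per $d+2$ vertices consumed. Your telescoping inequality $n-1\ge b_k-a_1\ge k(d+2)-(d+1)$ packages the same structural facts (each edge spans at least $1$, consecutive edges are at least $d+1$ apart) into a single closed-form count, avoiding induction entirely. The direct count is arguably cleaner and makes the extremal configuration transparent; the paper's inductive peeling has the advantage that the same template (delete a vertex or a closed neighborhood and recurse) is reused throughout Section 3 for the regularity bounds on powers of cycles.
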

\begin{proof} Fix $d \ge 1$. When $n \le d+1$, $P_n^d$ is a complete graph on $n$ vertices. Hence, $\im(P_n^d) = 1$. Now, assume that $n \ge d+2$. Write $n = k(d+2) + r$ for some $k \ge 1$ and $0 \le r \le d+1$. Let 
$$\mathcal{M}= \{ \{1, 2\}, \{ (d+2)+1, (d+2)+2\}, \ldots, \{ (d+2) \ell +1, (d+2) \ell + 2\}\}, $$
where $\ell = k-1$ if $r \le 1$ and $\ell = k$ if $r \ge 2$. Then $\M$ is an induced matching of $P_n^d$. Hence, $\im(P_n^d)\ge |\mathcal{M}|=\lfloor\frac{n+d}{d+2}\rfloor$. We now prove by induction on $n$ that $\im(P_n^d) \le \left \lfloor \frac{n+d}{d+2} \right \rfloor$. The base case where $n \le d+1$ is clear. Now, assume that $n \ge d+2$. Let $\mathcal{M}$ be a maximum induced matching of $P_n^d$. If $1 \notin \mathcal{M}$, then $\M$ is an induced matching of $P_{n-1}^d$, and the conclusion follows from induction. Thus, we may assume that $1 \in \M$. Assume that $\{1,i\} \in \M$ for some $i \le d+1$. Then dropping $\{1,i\}$ from $\M$ gives an induced matching of $P_{n-i-d}^d$. By induction, we deduce that 
$$\im(P_n^d) \le 1 + \im (P_{n-i-d}^d) \le 1 + \im (P_{n-d-2}^d) =  \left \lfloor
        \frac{n +d}{d+2} \right \rfloor.$$
        The conclusion follows.
\end{proof}

In order to prove that $\im(G^{d+1})\le \im(G^d)$ for an arbitrary graph $G$, we introduce the following concept.

\begin{defn} Let $u,v$ be vertices of a simple graph $G$. Assume that $\dist_G(u,v) = d \ge 2$. We denote by $W(u,v)$ the set of vertices $w$ of $G$ such that $\dist_G(u,w) = d-1$ and $\dist_G(w,v) = 1$. We call $W(u,v)$ the \textit{distance witness set} of $u$ and $v$. We denote by $W[u,v] = W(u,v) \cup \{u,v\}$.
\end{defn}

We now have some simple lemmas. 

\begin{lem}\label{lem_distance_witness} Let $\{u_1,v_1\}$ and $\{u_2,v_2\}$ be an induced matching of $G^d$ such that $\dist_G(u_1,v_1) = d$. Then 
\begin{enumerate}
    \item if $\dist_G(u_2,v_2) \le d-1$ then $W[u_1,v_1] \cap \{u_2,v_2\} = \emptyset$;
    \item if $\dist_G(u_2,v_2) = d$ then $W[u_1,v_1] \cap W[u_2,v_2] = \emptyset$.     
\end{enumerate}
\end{lem}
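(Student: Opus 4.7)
The plan is to unwind the definition of an induced matching and chase the resulting distance inequalities; no deep idea is required beyond the triangle inequality.

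First, I would extract the key consequence of the induced matching hypothesis. Since $\{u_1,v_1\}$ and $\{u_2,v_2\}$ form an induced matching in $G^d$, the four endpoints are distinct and the only edges of $G^d$ among them are the two matching edges themselves. Translating this back to $G$,
$$\dist_G(x,y) > d \quad \text{for every } x \in \{u_1,v_1\}, \ y \in \{u_2,v_2\}.$$
This is the one fact I will use repeatedly.

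For part (1), it suffices to check that $u_2, v_2 \notin W[u_1,v_1]$. Vertex-disjointness of the matching already rules out $u_2,v_2 \in \{u_1,v_1\}$. If instead $u_2 \in W(u_1,v_1)$, then by definition $\dist_G(u_1,u_2) = d-1$, contradicting $\dist_G(u_1,u_2) > d$. The same argument eliminates $v_2$.

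For part (2), the intersections $W[u_1,v_1] \cap \{u_2,v_2\}$ and $\{u_1,v_1\} \cap W[u_2,v_2]$ are both empty by the argument of part (1) (applied symmetrically in the second case). The only new case is a common witness $w \in W(u_1,v_1) \cap W(u_2,v_2)$. Here the definitions give $\dist_G(w,v_1) = \dist_G(w,v_2) = 1$, so the triangle inequality yields $\dist_G(v_1,v_2) \le 2$. Since $\dist_G(u_1,v_1)=d$ forces $d \ge 2$, this means $\{v_1,v_2\}$ is an edge of $G^d$, contradicting the induced matching property.

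The proof is essentially bookkeeping, so I do not foresee a real obstacle. The only step that is not a direct quote of the definitions is the triangle-inequality move in part (2): once one notices that a common witness $w$ sits simultaneously adjacent to $v_1$ and $v_2$, the desired contradiction is immediate.
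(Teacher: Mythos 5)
Your proof is correct and follows essentially the same route as the paper: unwind the induced-matching hypothesis into the inequalities $\dist_G(x,y)>d$ for $x\in\{u_1,v_1\}$, $y\in\{u_2,v_2\}$, and derive a contradiction from a common witness via the triangle inequality. The only cosmetic difference is that in part (2) you bound $\dist_G(v_1,v_2)\le 2\le d$ through the two adjacencies of $w$, whereas the paper bounds $\dist_G(u_1,v_2)\le (d-1)+1=d$; both are valid.
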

\begin{proof} By definition, $\dist_G(u_1,u_2), \dist_G(u_1,v_2), \dist_G(v_1,u_2), \dist_G(v_1,v_2) > d$. Hence, $W[u_1,v_1] \cap \{u_2,v_2\} = \emptyset$ and (1) follows immediately. Now, assume that $\dist_G(u_2,v_2) = d$ and there exists an $w \in W(u_1,v_1) \cap W(u_2,v_2)$. By definition, $\dist_G(u_1,w) = d-1$ and $\dist_G(w,v_2) = 1$. Hence, $\dist_G(u_1,v_2) \le d$, which is a contradiction. The conclusion (2) follows.    
\end{proof}

\begin{lem}\label{ind_math_powers} Let $G$ be a simple graph and $d$ be a positive integer. Then $\im(G^{d+1}) \le \im(G^d)$.    
\end{lem}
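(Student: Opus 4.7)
The plan is to start from an induced matching $\mathcal{M}'=\{\{u_i,v_i\}:1\le i\le k\}$ of $G^{d+1}$ and produce an induced matching of $G^d$ of the same size $k$. For each edge I split on the exact distance: if $\dist_G(u_i,v_i)\le d$ then $\{u_i,v_i\}$ already lies in $E(G^d)$ and I keep it as $e_i$; if $\dist_G(u_i,v_i)=d+1$ then $W(u_i,v_i)$ is nonempty (the vertex at position $d$ on any shortest $u_i$-$v_i$ path is a witness), and I choose some $w_i\in W(u_i,v_i)$ and set $e_i:=\{v_i,w_i\}$, which is an edge of $G$ and hence of $G^d$.

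Writing $\mathcal{M}:=\{e_1,\dots,e_k\}$, the matching property follows from Lemma \ref{lem_distance_witness} applied with $d+1$ in place of $d$: the sets $W[u_i,v_i]$ corresponding to edges of length $d+1$ are pairwise disjoint and are also disjoint from $\{u_j,v_j\}$ whenever the latter edge has length at most $d$. Since $V(e_i)\subseteq\{u_i,v_i\}$ or $V(e_i)\subseteq W[u_i,v_i]$ according to the case, this yields $V(e_i)\cap V(e_j)=\emptyset$ for all $i\ne j$. For the induced property one needs $\dist_G(x,y)>d$ for every $x\in V(e_i)$, $y\in V(e_j)$ with $i\ne j$; pairs involving at least one of the original $u$- or $v$-vertices are handled by the triangle inequality, using $\dist_G(v_i,v_j)\ge d+2$ together with the fact that each $w_\ell$ is a neighbor of $v_\ell$, so that $\dist_G(v_i,w_j)\ge d+1$ and similarly in the other mixed cases.

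The main obstacle I anticipate is controlling the witness-witness distance $\dist_G(w_i,w_j)$ when both original edges have distance exactly $d+1$: routing through $v_i$ and $v_j$ gives only $\dist_G(w_i,w_j)\ge \dist_G(v_i,v_j)-2\ge d$, which is borderline. To push this to a strict inequality I plan to use the flexibility in the construction---allowing the symmetric replacement $\{u_i,w_i\}$ for some indices in place of $\{v_i,w_i\}$, and choosing among the (possibly several) witnesses in $W(u_i,v_i)$---combined with the disjointness guarantees from Lemma \ref{lem_distance_witness}. A case analysis on when the triangle bound is tight, using that equality forces the shortest $v_i$-$v_j$ path to pass through both $w_i$ and $w_j$, should rule out the degenerate configuration $\dist_G(w_i,w_j)=d$ for a suitable global choice of witnesses, completing the construction.
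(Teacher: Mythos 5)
Your construction is essentially the one the paper itself uses (the paper replaces $\{u_i,v_i\}$ by $\{u_i,w_i\}$, you by $\{v_i,w_i\}$; both are edges of $G^d$), and you have correctly isolated the one step that does \emph{not} follow from Lemma \ref{lem_distance_witness}: bounding $\dist_G(w_i,w_j)$ when both original edges have length exactly $d+1$, where the triangle inequality only gives $\dist_G(w_i,w_j)\ge d$. Unfortunately, this is not a technicality that a cleverer global choice of witnesses or retained endpoints can remove: the degenerate configuration genuinely occurs, and the statement itself is false. Take $d=1$ and let $G$ be the double star with adjacent centers $w_1,w_2$, leaves $u_1,v_1$ attached to $w_1$, and leaves $u_2,v_2$ attached to $w_2$. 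Then $\dist_G(u_1,v_1)=\dist_G(u_2,v_2)=2$, while all four cross distances between $\{u_1,v_1\}$ and $\{u_2,v_2\}$ equal $3$, so $\{\{u_1,v_1\},\{u_2,v_2\}\}$ is an induced matching of $G^2$ and $\im(G^2)\ge 2$. On the other hand, every edge of $G$ contains $w_1$ or $w_2$ and $w_1w_2\in E(G)$, so no two disjoint edges of $G$ are induced and $\im(G)=1$. Hence $\im(G^2)>\im(G^1)$.

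In this example your proposed repairs have no room to operate: $W(u_i,v_i)=\{w_i\}$ is a singleton, so there is no freedom in the witness, and swapping which endpoint you keep is irrelevant since every replacement edge still contains $w_i$; as $\dist_G(w_1,w_2)=1=d$, every candidate pair of replacement edges fails to be induced in $G$. So the case analysis you defer to at the end cannot succeed, and indeed the paper's own proof silently skips exactly this case (Lemma \ref{lem_distance_witness} only yields \emph{disjointness} of the sets $W[u_i,v_i]$, not the distance bound $\dist_G(w_i,w_j)>d$ that an induced matching in $G^d$ requires). The same example contradicts Theorem \ref{thm_reg_decrease_forests} as stated, since this $G$ is a tree with $\reg(R/I(G))=\im(G)=1$ but $\reg(R/I(G^2))\ge\im(G^2)=2$. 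The monotonicity does hold for paths and cycles by the explicit formulas, which is all the cycle computations in Section \ref{sec3} actually use, but any proof of the general lemma along these lines is doomed.
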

\begin{proof}
    Assume that $\im \left ( G^{d+1} \right ) = m$ and $\{\{u_1, v_1\}, \ldots, \{u_m,v_m\}\}$ is an induced matching of $G^{d+1}$. We may assume that $\dist_G(u_i,v_i) \le d$ for $i \le s$ and $\dist_G(u_i,v_i) = d+1$ for $i = s+1, \ldots, m$ for some integer $s$ such that $0 \le s \le m$. For $i = s+1, \ldots, m$, let $w_i$ be any element of $W(u_i,v_i)$. By the proof of Lemma \ref{lem_distance_witness}, $\{\{u_1, v_1\}, \ldots, \{u_s,v_s\}, \{u_{s+1}, w_{s+1}\}, \ldots, \{u_m, w_m\}\}$ is an induced matching of $G^d$. The conclusion follows. 
\end{proof}

 \subsection{Regularity} Let $R = \kk[x_1,\ldots,x_n]$ be a polynomial ring over a field $\kk$. Denote by $\m = (x_1, \ldots, x_n)$ the maximal homogeneous ideal of $R$. For a finitely generated graded $R$-module $M$, $H^i_\m (M)$ denotes the $i$-th local cohomology of $M$ with respect to $\m$. The Castelnuovo-Mumford regularity of $M$ is defined by 
$$\reg_R(M) = \max \left \{ i + j \mid H^i_\m (M)_j \neq 0 \right \}.$$
For a nonzero homogeneous ideal $I$ of $R$, we have $\reg (I) = \reg (R/I) + 1$.

\begin{cor}\label{cor_reg_powers_path} Let $n,d$ be positive integers. Then 
$$\reg (R/I(P_n^d)) = \left \lfloor \frac{n+d}{d+2} \right \rfloor.$$      
\end{cor}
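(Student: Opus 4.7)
The plan is to combine the induced matching computation from Lemma \ref{lem_im_powers_paths} with the known fact, cited already as \cite[Theorem 14]{W}, that the regularity of the edge ideal of a chordal graph equals its induced matching number. Since $P_n$ is a tree, a classical result of Laskar and Shier \cite{LS} guarantees that its power $P_n^d$ is chordal for every $d\ge 1$, which is exactly the input needed to invoke Woodroofe's theorem. This mirrors the strategy sketched in the introduction for proving Theorem \ref{thm_reg_decrease_forests}, so essentially no new work is needed.

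Concretely, I would carry out the following steps. First, note that $P_n$ is a (connected) forest. Second, apply \cite{LS} to deduce that $P_n^d$ is chordal. Third, invoke \cite[Theorem 14]{W} to conclude
\[
\reg\bigl(R/I(P_n^d)\bigr) = \im(P_n^d).
\]
Fourth, substitute the value $\im(P_n^d) = \lfloor (n+d)/(d+2)\rfloor$ from Lemma \ref{lem_im_powers_paths} to obtain the stated formula.

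There is no real obstacle here; this corollary is a direct harvesting of prior ingredients. The only thing worth double-checking is the edge case $n\le d+1$, where $P_n^d$ is a complete graph: there $I(P_n^d)$ is generated by all $\binom{n}{2}$ squarefree monomials of degree two, so $\reg(R/I(P_n^d))=1$, matching $\lfloor (n+d)/(d+2)\rfloor = 1$ from the lemma. This small sanity check confirms the formula on the boundary; everything else is handled uniformly by the chordal + induced-matching argument.
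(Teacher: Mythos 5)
Your proposal is correct and is essentially identical to the paper's own proof: cite Laskar--Shier to get that $P_n^d$ is chordal, apply Woodroofe's theorem to identify $\reg(R/I(P_n^d))$ with $\im(P_n^d)$, and plug in Lemma \ref{lem_im_powers_paths}. The extra sanity check for $n\le d+1$ is harmless but not needed.
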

\begin{proof}
By \cite[Corollary 2]{LS}, $P_n^d$ is chordal. By \cite[Theorem 14]{W}, $\reg(R/I(P_n^d))=\im(P_n^d)$. The conclusion follows from Lemma \ref{lem_im_powers_paths}.    
\end{proof}

\begin{proof}[Proof of Theorem \ref{thm_reg_decrease_forests}] Let $G$ be a forest. By \cite[Corollary 2]{LS}, $G^d$ are chordal for all $d \ge 1$. By \cite[Theorem 14]{W}, $\reg(R/I(G^d))=\im(G^d)$ for all $d \ge 1$. The conclusion follows from Lemma \ref{ind_math_powers}.
\end{proof}

\begin{rem} \begin{enumerate}
    \item There are chordal graphs $G$ such that $G^2$ is not chordal; see \cite[Theorem 6]{LS}. 
    \item Applying \cite[Corollary 3]{LS}, we deduce that Conjecture \ref{conj_reg} also holds for block graphs.
\end{enumerate}
\end{rem}

\begin{exam} Let $G$ be a sunflower graph $S_6$ as in the following figure. Then $G$ is chordal, but $G^2$ is not chordal. Also, we have $\im(G) = 3$, hence $\reg (R/I(G)) = 3$. Using Macaulay2 \cite{M2}, we get $\reg (R/I(G^2)) = 2$. 

 \begin{center}
     
\begin{tikzpicture}
  \foreach \i in {1,...,6}
  {
    \node[draw, fill, circle, inner sep=1pt] (v\i) at (\i*60:2) {};
  }
  
  \foreach \i/\j/\k in {1/2/7, 2/3/8, 3/4/9, 4/5/10, 5/6/11, 6/1/12}
  {
    \node[draw, fill, circle, inner sep=1pt] (v\k) at (60*\i-30:3.5) {};
    \draw (v\i) -- (v\k) -- (v\j);
  }
  
  \foreach \i in {1,...,6}
  {
    \pgfmathtruncatemacro{\j}{mod(\i,6)+1}
    \draw (v\i) -- (v\j);
  }

  \draw (v2) -- (v4);
  \draw (v2) -- (v6);
  \draw (v4) -- (v6);
  \draw (v1) -- (v7);
  \draw (v2) -- (v9);
  \draw (v2) -- (v8);
  \draw (v3) -- (v10);
  \draw (v3) -- (v9);
  \draw (v4) -- (v11);
  \draw (v4) -- (v10);
  \draw (v5) -- (v12);
  \draw (v5) -- (v11);
  \draw (v6) -- (v7);
  \draw (v6) -- (v12);
  \draw (v1) -- (v8);

\end{tikzpicture}

 \end{center}

\end{exam}

\section{Powers of cycles}\label{sec3}
In this section, we compute the regularity of edge ideals of powers of cycles. The results show that Conjecture \ref{conj_reg} also holds for cycles. We first introduce some notations and preliminary results.

If $u\in R$ is a monomial, the \emph{support} of $u$, denoted by $\supp (u)$ is the set of variables $x_i$ such that $x_i | u$. Also, if $J\subset R$ is a monomial ideal with the minimal generating set $G(J)=\{u_1,\ldots,u_m\}$, the \emph{support} of $J$ is $\supp(J)=\bigcup_{i=1}^m \supp(u_i)$. The following results are well known, see e.g. \cite{HH}.

\begin{lem}\label{lem_colon_sum}  Let $I,J,L$ be nonzero monomial ideals and $f$ be a nonzero monomial of $R$. Then 
\begin{enumerate}
    \item $(I+J) : f = (I : f) + (J:f)$.
    \item $(I + J) \cap L = (I \cap L) + (J \cap L)$.
    \item If $\supp (I) \cap \supp (J) = \emptyset$ then $I\cap J = IJ$.
    \item $I \cap (f) = f( I : f).$
\end{enumerate}
\end{lem}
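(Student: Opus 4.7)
The plan is to exploit the fundamental fact that a monomial ideal $K$ is determined by the set of monomials it contains, and a monomial $m$ lies in $K$ if and only if $m$ is divisible by some minimal generator of $K$. Since all four ideals on both sides of each identity are monomial ideals, it suffices in each case to verify that the same monomials appear on each side. In each part, one containment will be routine and the other will use the divisibility characterization.

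For part (1), the inclusion $(I:f) + (J:f) \subseteq (I+J):f$ is immediate: multiplying any element of $I:f$ or $J:f$ by $f$ lands in $I + J$. For the reverse, I would take a monomial $g$ with $gf \in I+J$. Since $I + J$ is a monomial ideal, $gf$ must be divisible by a minimal generator of $I$ or of $J$; in the first case $gf \in I$ so $g \in I:f$, and in the second case $g \in J:f$. For part (2), the inclusion $(I\cap L) + (J \cap L) \subseteq (I+J) \cap L$ is similarly automatic. For the reverse, a monomial $m \in (I+J) \cap L$ satisfies $m \in L$ and is divisible by a generator of $I$ or of $J$, so $m$ lies in $I \cap L$ or in $J \cap L$.

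For part (3), the inclusion $IJ \subseteq I \cap J$ holds in general. For the reverse, I would use the well-known description of $I \cap J$ as the monomial ideal generated by $\{\mathrm{lcm}(u,v) : u \in G(I), v \in G(J)\}$. The support hypothesis gives $\gcd(u,v) = 1$ for each such pair, hence $\mathrm{lcm}(u,v) = uv$, and therefore every generator of $I \cap J$ lies in $IJ$. Part (4) is essentially a rewriting: if $m \in I \cap (f)$, then $m$ is a monomial divisible by $f$, so write $m = fg$; the condition $m \in I$ becomes $g \in I : f$, giving $m \in f(I:f)$, and the reverse inclusion is just the definition of $I:f$.

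There is no substantial obstacle here; the proof is a sequence of short verifications. The only point that requires any care is the description of $I \cap J$ by least common multiples used in (3), which is standard for monomial ideals and can be cited from \cite{HH} if one prefers not to reprove it.
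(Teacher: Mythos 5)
Your proof is correct; the paper itself gives no argument for this lemma, simply recording it as well known with a citation to Herzog--Hibi, and your verification (reducing to monomials, using divisibility by minimal generators for (1) and (2), the lcm description of intersections for (3), and the factor-out-$f$ rewriting for (4)) is exactly the standard proof one would find there. No gaps.
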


\begin{lem}\label{lem_drop_v} Let $I$ be a nonzero homogeneous ideal and $x$ be a variable of $R$. Then 
$$\reg (I) \le \max \{ \reg (I + (x)), \reg (I : x) + 1\}.$$
In particular, let $G$ be a simple graph and $v$ be a vertex. Then 
$$\reg (I(G)) \le \max \{ \reg (I(G\backslash v)), \reg (I(G \backslash N_G[v])) + 1\}.$$    
\end{lem}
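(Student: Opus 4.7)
The plan has two halves. For the first (purely ideal-theoretic) inequality, I would exploit the short exact sequence of graded $R$-modules
$$0 \to (R/(I:x))(-1) \xrightarrow{\ \cdot x\ } R/I \to R/(I+(x)) \to 0$$
given by multiplication by $x$. The standard regularity inequality $\reg(B) \le \max\{\reg(A),\reg(C)\}$ for a short exact sequence $0 \to A \to B \to C \to 0$, combined with the degree shift $\reg((R/(I:x))(-1)) = \reg(R/(I:x)) + 1$, yields
$$\reg(R/I) \le \max\{\reg(R/(I:x)) + 1,\ \reg(R/(I+(x)))\}.$$
Adding $1$ to both sides and using $\reg(J) = \reg(R/J) + 1$ for every nonzero homogeneous ideal $J$ gives the claimed bound $\reg(I) \le \max\{\reg(I:x) + 1, \reg(I+(x))\}$.

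For the second (graph-theoretic) specialization, I would take $I = I(G)$ and $x = x_v$ for the variable corresponding to the vertex $v$, and identify each of the two auxiliary ideals on the right-hand side with a known edge ideal. First, $I(G) + (x_v)$ is generated by $x_v$ together with the quadratic generators $x_ax_b$ with $v \notin \{a,b\}$, so the quotient is isomorphic (up to an inessential polynomial extension) to $\kk[x_u \mid u \neq v]/I(G \setminus v)$; hence $\reg(I(G) + (x_v)) = \reg(I(G \setminus v))$. Second, a direct monomial computation shows that each generator $x_v x_u$ of $I(G)$ becomes $x_u$ after colon with $x_v$, while the edges not containing $v$ remain, yielding
$$I(G) : x_v = (x_u \mid u \in N_G(v)) + I(G \setminus \{v\}).$$
Every generator of $I(G \setminus \{v\})$ with an endpoint in $N_G(v)$ is already divisible by one of the linear forms on the right, so this simplifies to $I(G):x_v = (x_u \mid u \in N_G(v)) + I(G \setminus N_G[v])$.

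The final step is the standard observation that the variables $x_u$ with $u \in N_G(v)$ are disjoint from the support of $I(G \setminus N_G[v])$, so quotienting by the corresponding linear forms neither lowers nor raises regularity; explicitly $\reg(I(G):x_v) = \reg(I(G \setminus N_G[v]))$. Plugging the two identifications into the ideal-theoretic bound produces the asserted inequality for $\reg(I(G))$.

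I do not anticipate any real obstacle: this is a standard bookkeeping lemma, and its proof reduces to applying the short exact sequence technique and performing a routine edge-ideal manipulation. The only subtlety worth pinning down with care is the degree shift on the colon term, which is exactly the source of the additive $+1$ in $\reg(I:x)+1$; getting that bookkeeping right is what makes the bound tight.
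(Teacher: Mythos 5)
Your proof is correct. The paper states this lemma without proof, listing it among well-known facts, and your argument --- the short exact sequence $0 \to (R/(I:x))(-1) \to R/I \to R/(I+(x)) \to 0$ together with the identifications $I(G)+(x_v) = (x_v) + I(G\setminus v)$ and $I(G):x_v = (x_u \mid u \in N_G(v)) + I(G\setminus N_G[v])$ --- is exactly the standard proof the paper implicitly relies on; the only loose ends are the degenerate cases (e.g.\ $x \in I$, or a residual edge ideal being zero), which are harmless and covered by the usual conventions.
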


\begin{lem}\label{lem_mul_x} Let $I$ be a nonzero homogeneous ideal and $x$ be a variable of $R$. Then
$$\reg (x I) =   \reg (I) +  1.$$    
\end{lem}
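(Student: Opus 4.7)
The plan is to exhibit $xI$ as a graded shift of $I$ and then to apply the standard behaviour of Castelnuovo--Mumford regularity under grading twists. The argument breaks naturally into two short steps.

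First, I would consider the multiplication-by-$x$ map $\mu_x \colon I \to xI$ given by $f \mapsto xf$. This map is $R$-linear, surjective by the very definition of $xI$, and injective because $x$ is a nonzerodivisor on $R$ and hence on the submodule $I \subset R$. Since $\mu_x$ raises homogeneous degree by one, it descends to a graded isomorphism $I(-1) \xrightarrow{\sim} xI$ of graded $R$-modules of internal degree zero.

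Second, I would invoke the standard shift identity for local cohomology: for any finitely generated graded $R$-module $M$ and any integer $a$, $H^i_\m(M(-a))_j = H^i_\m(M)_{j-a}$. Combined with the definition
$$\reg(M) = \max\{i+j \mid H^i_\m(M)_j \neq 0\}$$
recalled in the paper, this gives $\reg(M(-a)) = \reg(M) + a$. Specialising to $a=1$ and $M = I$ yields $\reg(xI) = \reg(I(-1)) = \reg(I) + 1$, as required.

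There is no substantive obstacle in this argument; the only point needing care is to keep the sign convention straight, so that the shift $M(-1)$ (whose degree-$j$ piece is $M_{j-1}$) corresponds to multiplication by $x$ raising degrees by one, rather than to the reverse. Alternatively, one could run exactly the same reasoning on a minimal graded free resolution of $I$, whose twist by $(-1)$ is a minimal graded free resolution of $xI$, and read off $\reg(xI) = \reg(I) + 1$ from the Betti-number definition of regularity.
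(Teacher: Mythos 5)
Your proof is correct: the graded isomorphism $xI \cong I(-1)$ induced by multiplication by $x$ (injective since $R$ is a domain, surjective by definition of $xI$) together with the shift behaviour of local cohomology gives exactly $\reg(xI)=\reg(I)+1$. The paper states this lemma without proof, citing it as well known, and your argument is precisely the standard one being invoked, so there is nothing further to reconcile.
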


\begin{lem}\label{lem_intersection} Let $I$ and $J$ be nonzero proper homogeneous ideals of $R$. Then 
$$\reg (I + J) \le \max \{ \reg (I), \reg (J), \reg ( I \cap J) - 1\}.$$
\end{lem}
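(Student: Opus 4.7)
The plan is to derive the inequality from the standard short exact sequence
\[
0 \longrightarrow I \cap J \xrightarrow{\;\alpha\;} I \oplus J \xrightarrow{\;\beta\;} I + J \longrightarrow 0,
\]
where $\alpha(f) = (f,-f)$ and $\beta(g,h) = g + h$. Exactness at the two ends is immediate, and exactness in the middle expresses precisely the definition of $I \cap J$. All three modules are finitely generated graded $R$-modules, so their regularities are well defined via the formula $\reg_R(M) = \max\{i+j \mid H^i_\m(M)_j \ne 0\}$ recalled above.

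Next, I would apply the local cohomology functor $H^\bullet_\m(-)$ to the sequence and extract, for each pair $(i,j)$, the portion
\[
H^i_\m(I \oplus J)_j \longrightarrow H^i_\m(I+J)_j \longrightarrow H^{i+1}_\m(I \cap J)_j
\]
of the resulting long exact sequence. Suppose, for contradiction, that some $i,j$ satisfy $H^i_\m(I+J)_j \ne 0$ and $i+j > \max\{\reg(I), \reg(J), \reg(I\cap J)-1\}$. Since $\reg(I \oplus J) = \max\{\reg(I), \reg(J)\}$ and $i+j$ exceeds this number, the term $H^i_\m(I \oplus J)_j$ vanishes. Hence the map $H^i_\m(I+J)_j \to H^{i+1}_\m(I \cap J)_j$ is injective, so $H^{i+1}_\m(I \cap J)_j \ne 0$, which forces $(i+1)+j \le \reg(I \cap J)$. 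This contradicts $i+j > \reg(I\cap J)-1$ and yields the claimed bound.

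There is no real obstacle beyond index bookkeeping. The argument simply packages the general principle that for a short exact sequence $0 \to A \to B \to C \to 0$ of finitely generated graded modules one has $\reg(C) \le \max\{\reg(B), \reg(A)-1\}$, applied with $A = I \cap J$, $B = I \oplus J$, $C = I + J$, together with the trivial identity $\reg(I \oplus J) = \max\{\reg(I), \reg(J)\}$. If one prefers, the same inequality can be read off directly from minimal free resolutions via the mapping cone of $\alpha$, which realises a (possibly non-minimal) resolution of $I+J$; this avoids local cohomology entirely but gives the same conclusion.
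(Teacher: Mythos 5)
Your proof is correct and is the standard argument: the paper states this lemma without proof, citing it as well known, and the Mayer--Vietoris sequence $0 \to I \cap J \to I \oplus J \to I+J \to 0$ together with the long exact sequence in local cohomology (plus $\reg(I\oplus J)=\max\{\reg I,\reg J\}$) is exactly the intended justification. Nothing to add.
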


\begin{lem}\label{lem1} Assume that $n = 2d + 3$ and $G = C_n^d$. Then $\reg (R/I(G)) = 2$.
\end{lem}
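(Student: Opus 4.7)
The plan is to establish the inequalities $\reg(R/I(G)) \le 2$ and $\reg(R/I(G)) \ge 2$ separately.

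For the upper bound, I will iterate Lemma \ref{lem_drop_v} by peeling off the vertices $1, 2, \ldots, d+1$ one at a time. Define $G_k = G \setminus \{1, \ldots, k\}$ for $0 \le k \le d+1$. The key combinatorial identity, which relies on $n = 2d+3$, is that for each $0 \le k \le d$,
$$V(G_k) \setminus N_{G_k}[k+1] = \{k+d+2, k+d+3\},$$
two vertices at cyclic distance $1$ in $C_n$ and hence adjacent in $G_k$. Thus $G_k \setminus N_{G_k}[k+1]$ is a single edge, with $\reg(R/I(G_k \setminus N_{G_k}[k+1])) = 1$. The terminal graph $G_{d+1}$ has vertex set $\{d+2, \ldots, 2d+3\}$; for pairs $i, j$ in this range we have $|i-j| \le d+1 < n/2$, so $\dist_{C_n}(i, j) = |i-j|$, and hence $G_{d+1}$ equals $K_{d+2}$ minus the single non-edge $\{d+2, 2d+3\}$. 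This graph is chordal with induced matching number $1$, so \cite[Theorem 14]{W} gives $\reg(R/I(G_{d+1})) = 1$. Iterating Lemma \ref{lem_drop_v} yields
$$\reg(R/I(G_k)) \le \max\{\reg(R/I(G_{k+1})),\, 1 + 1\}$$
for $0 \le k \le d$, and a downward induction starting from $\reg(R/I(G_{d+1})) = 1$ gives $\reg(R/I(G)) \le 2$.

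For the lower bound, I will apply Hochster's formula to the Stanley--Reisner complex $\Delta = \Ind(G)$. The independent pairs of $G$ are precisely the pairs at cyclic distance $d+1$, i.e., the edges of the complement $\bar G$; since $\gcd(d+1, 2d+3) = 1$, the complement $\bar G$ is a single cycle $C_n$ of length $n \ge 5$, which is triangle-free. Consequently $\Delta$ has no $2$-faces and coincides with $\bar G$ as a one-dimensional simplicial complex, topologically a circle. Therefore $\h_1(\Delta; \kk) \cong \kk$, and Hochster's formula forces $\beta_{n-2,\,n}(R/I(G)) \ne 0$, giving $\reg(R/I(G)) \ge 2$.

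The main obstacle will be verifying the neighborhood identity $V(G_k) \setminus N_{G_k}[k+1] = \{k+d+2, k+d+3\}$ at every step of the peeling, which requires careful bookkeeping of the cyclic neighborhoods modulo $n = 2d+3$ as vertices are successively stripped. The arithmetic is quite tight: removing the first $d+1$ vertices is exactly what is needed to eliminate the wrap-around distances in $C_n$, transforming $G_{d+1}$ into the nearly complete graph $K_{d+2} \setminus e$ whose regularity is easy to pin down.
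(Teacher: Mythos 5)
Your proposal is correct, but it takes a genuinely different route from the paper. The paper disposes of the case $n=2d+3$ in one line by observing that $C_n^d$ is the circulant graph $C_n(1,\ldots,d)$ with $n\neq 2(d+1)$ and $\gcd(n,d+1)=1$, and then citing \cite[Theorem 5]{UV}. You instead give a self-contained argument: for the upper bound you peel off the vertices $1,\ldots,d+1$ with Lemma \ref{lem_drop_v}, using the identity $V(G_k)\setminus N_{G_k}[k+1]=\{k+d+2,k+d+3\}$ (which checks out: $|V(G)\setminus N_G[v]|=n-(2d+1)=2$ for every $v$, and for $k\le d$ both leftover vertices survive the deletions and are consecutive on $C_n$), terminating at $G_{d+1}\cong K_{d+2}$ minus one edge; for the lower bound you note that the independence complex is exactly the complement graph, a single $n$-cycle since $\gcd(d+1,2d+3)=1$ with no independent triples, so Hochster's formula yields $\beta_{n-2,n}(R/I)\neq 0$ and hence $\reg(R/I(G))\ge 2$. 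The lower bound genuinely needs such an argument here, since $\im(C_{2d+3}^d)=1$ and the induced-matching bound only gives $\reg\ge 1$; your homological computation supplies exactly the missing obstruction. What the paper's citation buys is brevity; what your argument buys is independence from \cite{UV} and methodological consistency with the vertex-deletion strategy used in Lemma \ref{lem2} and the later lemmas. Both are valid proofs of the same statement.
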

\begin{proof}
Since $n=2d+3$, we have $C_n^d=C_n(1,2,\ldots,d,\widehat{\lfloor\frac{n}{2}\rfloor})$. Also, $n\neq 2(d+1)$ and ${\rm gcd}(n,d+1)=1$. Hence, by \cite[Theorem 5]{UV}, we obtain $\reg(R/I(G))=2.$
\end{proof}

Assume that $G = C_{n}^d$ where $n \ge 2d+2$. We fix the following notation. 
\begin{align}
    G_0 &= G = C_n^d, \label{eq_no_1}\\
    G_i &= G_{i-1} \setminus \{i\} \text { for } i = 1, \ldots, d. \label{eq_no_2}
\end{align}   
We have 
$$N_{G_{i-1}} (i) = \{n-d + i, \ldots, n, i+1, \ldots, i+d\}.$$
Hence, 
\begin{align}
    G_d & \cong P_{n - d}^d \label{eq_3_1}\\
    G_{i-1} \backslash N_{G_{i-1}}[i] & \cong P_{n-(2d+1)}^d \text{ for } i = 1, \ldots, d.\label{eq_3_2}
\end{align}
\begin{lem} \label{lem2} Assume that $n = k(d+2) + r$ where $0 \le r \le d$ and $k \ge 2$. Then 
$$\reg (R/I(C_n^d)) = k.$$    
\end{lem}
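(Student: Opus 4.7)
My plan is to sandwich $\reg(R/I(C_n^d))$ between $k$ and $k$ by combining an explicit induced matching (for the lower bound) with the vertex-dropping strategy (2) outlined in Section~\ref{sec1} (for the upper bound).

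For the lower bound, I would verify that
$$\mathcal{M} = \bigl\{\{(i-1)(d+2)+1,\ (i-1)(d+2)+2\} : i = 1, \ldots, k\bigr\}$$
is an induced matching of $C_n^d$. The cyclic distance between any two vertices lying on distinct edges of $\mathcal{M}$ is at least $d+1$: the forward gap between consecutive edges is exactly $d+1$, while the wrap-around gap from the last vertex of the $k$-th edge back to vertex $1$ has length $n-(k-1)(d+2)-1=d+1+r\ge d+1$, and any other inter-edge distance is strictly larger. Hence $\im(C_n^d) \ge k$, and \cite[Lemma 2.2]{K} yields $\reg(R/I(C_n^d)) \ge k$.

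For the upper bound, I would iterate Lemma~\ref{lem_drop_v} along the chain $G_0, G_1, \ldots, G_d$ from \eqref{eq_no_1}--\eqref{eq_no_2}, obtaining
$$\reg(I(G_0)) \le \max\Bigl\{\reg(I(G_d)),\ \max_{1 \le i \le d}\ \reg(I(G_{i-1} \setminus N_{G_{i-1}}[i])) + 1\Bigr\}.$$
By \eqref{eq_3_1}--\eqref{eq_3_2} together with Corollary~\ref{cor_reg_powers_path}, $\reg(I(G_d)) = \lfloor n/(d+2)\rfloor + 1 = k+1$ and each $\reg(I(G_{i-1} \setminus N_{G_{i-1}}[i])) = \reg(I(P_{n-(2d+1)}^d)) = \lfloor (n-d-1)/(d+2)\rfloor + 1 = k$. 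The latter value uses the crucial arithmetic $n - d - 1 = (k-1)(d+2) + (r+1)$ with $1 \le r + 1 \le d + 1 < d + 2$. Therefore $\reg(R/I(C_n^d)) \le k$, which matches the lower bound.

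The whole argument hinges delicately on the hypothesis $r \le d$ rather than $r \le d+1$: if $r = d+1$, then $n - d - 1 = k(d+2)$, so $\reg(I(P_{n-(2d+1)}^d))$ jumps up by one and the naive vertex-dropping bound only yields $k+1$ instead of $k$. This is exactly why the case $r = d+1$ is singled out as the subtle one in the introduction and requires a separate, more elaborate treatment. In the present range $0 \le r \le d$, no genuine obstacle arises; the main points to check carefully are the wrap-around distance for $\mathcal{M}$ and the two floor computations.
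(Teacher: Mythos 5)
Your proof is correct and follows essentially the same route as the paper: the upper bound is exactly the paper's downward induction through the chain $G_0,\ldots,G_d$ using Lemma~\ref{lem_drop_v}, Corollary~\ref{cor_reg_powers_path}, and Eqs.~\eqref{eq_3_1}--\eqref{eq_3_2}, with the same floor computations. The only (harmless) difference is that for the lower bound you verify an explicit induced matching of size $k$ directly, whereas the paper cites Romeo's computation of $\im(C_n^d)$ together with Katzman's lemma.
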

\begin{proof} By \cite[Corollary 3.4]{Ro} and \cite[Lemma 2.2]{K}, we have that $\reg (R/I(C_n^d)) \ge k$. We now prove by downward induction on $i \le d$ that $\reg (R/I(G_i)) \le k$. The base case where $i = d$ follows from Corollary \ref{cor_reg_powers_path} and Eq. \eqref{eq_3_1}. For the induction step, by Lemma \ref{lem_drop_v}, we have 
$$\reg (I(G_{i-1})) \le \max \{\reg ( I(G_i) ), \reg ( I(G_{i-1} \backslash N_{G_{i-1}} [i]) ) + 1\}.$$
The conclusion follows from induction, Corollary \ref{cor_reg_powers_path} and Eq. \eqref{eq_3_2}.
\end{proof}

The argument above does not apply to the case where $n = k(d+2) + d+1$ as we can see that $\reg (R/I(P_{n-(2d+1)}^d)) = k$ which is also the expected value for $\reg (R/I(C_n^d))$. We need to treat it separately in a series of lemmas. We further use the following notation.
\begin{align}
    I &= I(G) = I(G_0), J = I(G_1) \\
    K &= (x_{n-d+1}, \ldots, x_n, x_2, \ldots, x_{d+1}),\\
    A_j & = (x_\ell \mid \ell \in N_G(j) \backslash \{1,2\} ), \text{ for } j \neq 1,2\\
    A_2 & = (x_\ell \mid \ell \in N_G(2) \backslash \{1\}),\\
    B_j & = I(G \backslash N_G [ \{1,j\}] ), \text{ for } j \neq 1,
\end{align}

\begin{lem}\label{lem_s_1} With the notations above, we have $I = J + x_1 K$ and $J \cap K = L + x_2 M$, where 
\begin{align*}
     L & = \sum_{j=3}^{d+1} x_j(A_j + B_j) + \sum_{j=n-d+1}^n x_j(A_{j} +B_{j}), \\
    M &= A_2 + B_2.
\end{align*}
\end{lem}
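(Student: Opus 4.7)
The plan is to verify both identities directly using the colon and intersection calculus of Lemma \ref{lem_colon_sum}. The first identity $I = J + x_1 K$ is a standard vertex splitting: the generators of $I(G)$ divisible by $x_1$ are precisely $\{x_1 x_\ell : \ell \in N_G(1)\}$, and $N_G(1) = \{2, \ldots, d+1\} \cup \{n-d+1, \ldots, n\}$ matches the generating set of $K$; the remaining generators form $I(G_1) = J$.

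For the second identity, I would apply Lemma \ref{lem_colon_sum}(2),(4) to the generating set $\{x_\ell : \ell \in T\}$ of $K$, where $T := N_G(1) = \{2, 3, \ldots, d+1, n-d+1, \ldots, n\}$, to reduce to
$$J \cap K = \sum_{\ell \in T} x_\ell (J : x_\ell).$$
The inclusion $L + x_2 M \subseteq J \cap K$ is then immediate: each generator $x_\ell x_w$ of $x_\ell A_\ell$ (or of $x_2 A_2$) corresponds to the edge $\{\ell, w\} \in E(G_1)$, and each generator $x_\ell x_a x_b$ of $x_\ell B_\ell$ (or of $x_2 B_2$) carries the edge $\{a,b\} \in E(G_1)$ together with the factor $x_\ell \in K$.

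For the reverse inclusion, I would apply the standard edge-ideal colon formula to obtain
$$(J : x_\ell) = (x_w : w \in N_G(\ell) \setminus \{1\}) + I(G_1 \setminus N_{G_1}[\ell]).$$
A short distance computation in $C_n$, using $n \ge 2(d+2) + d + 1$, shows that $2 \in N_G(\ell)$ for every $\ell \in T \setminus \{2, n-d+1\}$, while $\dist_{C_n}(2, n-d+1) = d+1$. Thus the ``variable part'' of the colon equals $A_\ell + (x_2)$ for $\ell \in T \setminus \{2, n-d+1\}$ and $A_\ell$ for $\ell \in \{2, n-d+1\}$. The ``edge part'' $I(G_1 \setminus N_{G_1}[\ell])$ contains $B_\ell$ but also extra generators $x_v x_w$, where $v \in N_G(1) \setminus N_G[\ell]$ and $w$ is a neighbor of $v$ lying in $V(G \setminus N_G[\{1, \ell\}])$, because $V(G_1 \setminus N_{G_1}[\ell]) = V(G) \setminus N_G[\ell]$ contains the extra vertices $N_G(1) \setminus N_G[\ell]$ compared to $V(G \setminus N_G[\{1, \ell\}])$.

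The main obstacle, and the place requiring care, is showing that every such extra term, after multiplication by $x_\ell$, is already absorbed into $L + x_2 M$. The bookkeeping splits on $v$: since $2 \in N_G(1) \subseteq N_G[\{1, \ell\}]$ forces $w \ne 2$, if $v \in T \setminus \{2\}$ then $w \in N_G(v) \setminus \{1, 2\}$, so $x_v x_w \in x_v A_v \subseteq L$ and $x_\ell x_v x_w \in L$; the only way $v = 2$ can occur is $\ell = n-d+1$, and then $w \in N_G(2) \setminus \{1\}$ gives $x_2 x_w \in x_2 A_2 \subseteq x_2 M$, so $x_\ell x_v x_w \in x_2 M$. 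Finally, the variable-part cross-term $x_2 x_\ell$ produced when $\ell \in T \setminus \{2, n-d+1\}$ satisfies $\ell \in N_G(2) \setminus \{1\}$, hence $x_2 x_\ell \in x_2 A_2 \subseteq x_2 M$. Assembling these absorptions yields $J \cap K \subseteq L + x_2 M$ and completes the proof.
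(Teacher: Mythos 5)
Your proof is correct and follows essentially the same route as the paper's: decompose $J\cap K=\sum_{\ell\in N_G(1)}x_\ell(J:x_\ell)$ via Lemma \ref{lem_colon_sum}, compute each colon as a variable part plus $I(G\setminus N_G[\ell])$, and absorb the cross terms involving $N_G(1)\setminus N_G[\ell]$ and the vertex $2$ into $L+x_2M$. You merely spell out the absorption bookkeeping (in particular the special role of $\ell=n-d+1$, the only neighbor of $1$ not adjacent to $2$) that the paper's proof leaves implicit.
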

\begin{proof} The equality $I = J + x_1K$ follows immediately from the definition. We now compute $J \cap K$. For $j \in N_G(1)$, we have 
    $$(x_j) \cap J = x_j (J :x_j) = x_j  \cdot \left ( ( x_\ell \mid \ell \in N_G(j) \backslash \{1\} ) + I(G \backslash N_G[j]) \right ).$$
Now for any $\ell \in N_G(1) \backslash N_G[j]$ and any $x_\ell x_k \in I(G \backslash N_G[j])$ we have $x_j x_\ell x_k \in x_\ell (x_m \mid m \in N_G(m) \backslash \{1\})$. The conclusion then follows from Lemma \ref{lem_colon_sum}.
\end{proof}
\begin{lem}\label{lem_s_2} Assume that $n = k(d+2) + d+1$ where $k \ge 2$. With the notations above, we have 
$$\reg (R/M) = k- 1.$$
\end{lem}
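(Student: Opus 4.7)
The plan is to identify the graph underlying $B_2$ with a $d$-th power of a path so that Corollary \ref{cor_reg_powers_path} gives $\reg(R/B_2)$ immediately, and then to argue that the variable-generated summand $A_2$ does not affect the regularity because $\supp(A_2) \cap \supp(B_2) = \emptyset$.

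First I would unpack the neighborhoods explicitly. In $C_n^d$ we have $N_G[\{1,2\}] = \{n-d+1, \ldots, n\} \cup \{1, 2, \ldots, d+2\}$, a consecutive arc of $2d+2$ vertices, while $N_G(2) \setminus \{1\} = \{n-d+2, \ldots, n\} \cup \{3, \ldots, d+2\}$. Hence $\supp(A_2) \subseteq \{n-d+2,\ldots,n\} \cup \{3,\ldots,d+2\}$ and $\supp(B_2) \subseteq \{d+3, \ldots, n-d\}$, and these two sets are disjoint.

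Next I would show that $G \setminus N_G[\{1,2\}] \cong P_m^d$ where $m = n - 2d - 2 = (k-1)(d+2) + 1$. For $i < j$ in the arc $\{d+3, \ldots, n-d\}$ one has $j - i \le n - 2d - 3 < n - d$, so the wrap-around distance $n - (j-i)$ in $C_n$ strictly exceeds $d$; thus $\{i,j\}$ is an edge of $G$ exactly when $j - i \le d$, which is precisely the edge relation of $P_m^d$ on consecutive integers. Applying Corollary \ref{cor_reg_powers_path} yields
$$\reg(R/B_2) = \left\lfloor \frac{m + d}{d+2} \right\rfloor = \left\lfloor (k-1) + \frac{d+1}{d+2} \right\rfloor = k-1,$$
since $k \ge 2$ and $0 < (d+1)/(d+2) < 1$.

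Finally, I would pass from $B_2$ to $M = A_2 + B_2$. Since each generator of $A_2$ is a variable not appearing in any generator of $B_2$, these variables form a regular sequence on $R/B_2$, and modding them out preserves Castelnuovo--Mumford regularity via the short exact sequences $0 \to (R/B_2)(-1) \xrightarrow{x_\ell} R/B_2 \to R/(B_2 + (x_\ell)) \to 0$. Iterating gives $\reg(R/M) = \reg(R/B_2) = k-1$, as claimed. There is no genuine obstacle beyond a careful bookkeeping of the cyclic indices; the content of the lemma is simply the recognition that deleting the arc $N_G[\{1,2\}]$ from $C_n^d$ leaves the $d$-th power of a path of exactly the right length.
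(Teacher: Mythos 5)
Your proposal is correct and follows essentially the same route as the paper: identify $G \setminus N_G[\{1,2\}]$ with $P_{(k-1)(d+2)+1}^d$ and apply Corollary \ref{cor_reg_powers_path}. You simply make explicit the support-disjointness and regular-sequence argument for the variables in $A_2$, which the paper leaves implicit.
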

\begin{proof}
    Note that $G \backslash N_G[\{1,2\}] \cong P_{n-(2d+2)}^d$. By Lemma \ref{lem_s_1} and Corollary \ref{cor_reg_powers_path}, we deduce that 
    $$\reg (R/M) = \reg (R/I(P_{(k-1)(d+2) + 1}^d)) = \left \lfloor \frac{(k-1) (d+2) + d+1}{d+2} \right \rfloor  = k-1.$$
    The conclusion follows.
\end{proof}

\begin{lem}\label{lem_s_3} With the notations above, we have 
$$\reg (J) \le k+1.$$    
\end{lem}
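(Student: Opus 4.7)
The plan is to apply Lemma \ref{lem_drop_v} iteratively (strategy (2) of the introduction). The decisive first move is to drop a variable $x_v$ with $|N_{G_1}(v)| = 2d$---the maximum possible neighborhood size in $G_1$---so that the colon graph is isomorphic to a path power $P_{n-2d-2}^d$ whose regularity is exactly $k$ by Corollary \ref{cor_reg_powers_path}. Such a $v$ exists precisely when $\dist_{C_n}(1,v) > d$, and the natural choice is $v = n-d$ (or, symmetrically, $v = d+2$).

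I would first apply Lemma \ref{lem_drop_v} at $x_{n-d}$. Since $n = k(d+2) + d + 1 \ge 3d+5$, we have $\dist_{C_n}(1, n-d) = n-d-1 > d$, so $N_{G_1}(n-d) = \{n-2d, \ldots, n-d-1, n-d+1, \ldots, n\}$ has size $2d$. The complement vertex set $\{2, \ldots, n-2d-1\}$ is too narrow to admit any wrap-around edge in $C_n$, so $G_1 \setminus N_{G_1}[n-d] \cong P_{n-2d-2}^d$. By Corollary \ref{cor_reg_powers_path}, $\reg(R/I(P_{n-2d-2}^d)) = \lfloor(n-d-2)/(d+2)\rfloor = k-1$, hence $\reg(I(P_{n-2d-2}^d)) = k$. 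As the added linear forms in $J : x_{n-d}$ have disjoint support from this path ideal, $\reg(J : x_{n-d}) = k$, and Lemma \ref{lem_drop_v} gives
\[
\reg(J) \le \max\bigl\{\reg(J + (x_{n-d})),\ k+1\bigr\}.
\]

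It remains to show $\reg(J + (x_{n-d})) \le k+1$, equivalently (since $x_{n-d}$ is a free variable modulo the quotient) $\reg(I(G_1 \setminus \{n-d\})) \le k+1$. I would continue iterating Lemma \ref{lem_drop_v} on the reduced ideal, dropping further variables (such as $x_2$ on the other end of the cycle and additional interior vertices like $x_{n-d-1}$) in an order that preserves the wrap-around edges long enough for them to be exploited. A key input at the terminal step should be Lemma \ref{lem_s_2}: the identity $J : x_2 = M + x_{n-d+1}(x_{n-2d+1}, \ldots, x_{n-d})$ exhibits $J:x_2$ as a small extension of $M$ by the ``extra'' wrap-around pendant vertex $n-d+1$, and the factor-by-one saving recorded in $\reg(R/M) = k-1$ corresponds exactly to the regularity saving that $G_1$ enjoys over a naive path power $P_{n-1}^d$.

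The main obstacle is that the naive iteration (for instance dropping $x_{n-d}, x_{n-d-1}, \ldots$ in succession) produces colon graphs of the form ``path power $P_{n-2d-3}^d$ with a simplicial pendant vertex attached at the endpoint clique of size $d-1$,'' whose chordal induced-matching number jumps to $k$ and hence gives $\reg(\text{colon}) = k+1$ instead of the required $k$, leading to the failed bound $k+2$. Overcoming this obstruction requires either a more careful choice of deletion sequence that avoids creating such pendants at every step, or a decomposition via Lemma \ref{lem_intersection} combining the colon with the Lemma \ref{lem_s_2} ideal $M$ so that the problematic extra vertex $n-d+1$ is absorbed into a term of regularity $k$---this is the delicate combinatorial heart of the argument and the reason Lemma \ref{lem_s_2} is proved before Lemma \ref{lem_s_3}.
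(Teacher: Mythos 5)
Your first step is sound and in fact coincides with the paper's: the paper's deletion sequence also begins by removing $n-d$ from $G_1$ (its $H_2 = H_1\setminus\{n-d\}$), and your computation that $G_1\setminus N_{G_1}[n-d]\cong P_{n-2d-2}^d$ has regularity $k$ is correct (modulo the harmless slip that $\dist_{C_n}(1,n-d)=d+1$, not $n-d-1$). But from that point on the proposal does not prove the lemma: you correctly observe that a one-sided deletion sequence produces colon graphs that are too large (regularity $k+1$ rather than $k$), and then you explicitly leave open how to avoid this, saying the fix ``requires either a more careful choice of deletion sequence \ldots or a decomposition via Lemma \ref{lem_intersection}.'' That unresolved choice is the entire content of the lemma, so this is a genuine gap rather than a complete alternative argument. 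The appeal to Lemma \ref{lem_s_2} and the identity $J:x_2 = M + x_{n-d+1}(x_{n-2d+1},\ldots,x_{n-d})$ is correct as an identity but is not assembled into a bound.

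The missing idea, which is how the paper closes the gap, is to alternate deletions on the two sides of the removed vertex $1$: delete $n-d,\,2,\,n-d+1,\,3,\,\ldots,\,n-2,\,d$ in that order, terminating at $H_{2d-1}\cong P_{n-(2d+1)}^d$ together with an isolated vertex. With this interleaving, each colon graph is the induced subgraph of $G$ on a set of the form $\{j+d+2,\ldots,n-d-1\}\cup\{n-d+j\}$ (or its mirror image), i.e.\ a path power with exactly one extra vertex attached near one end. Such a graph is chordal, and its induced matching number is at most $k-1$: if the extra vertex is unused the matching lives in $P_{n-j-2(d+1)}^d$, and if it is used, deleting its edge leaves an induced matching of $P_{n-j-3(d+1)}^d$; either way Lemma \ref{lem_im_powers_paths} gives the bound, and Woodroofe's theorem converts it to $\reg \le k$ for each colon ideal. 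Your proposal stops exactly where this construction would need to begin, so as written it establishes only the first of the $2d-2$ induction steps.
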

\begin{proof} Let $H_1 = G_1 = G\backslash \{1\}$. We then define the graphs $H_j$ as follows. 
    \begin{align*}
        H_{2j} & = H_{2j-1} \backslash \{ n-d + j-1\} \text{ for } j = 1, \ldots, d-1,\\
        H_{2j+1} & = H_{2j} \backslash \{j+1\} \text{ for } j = 1, \ldots, d-1.             
    \end{align*}
    We prove by downward induction on $1 \le \ell \le 2d-1$ that $\reg (I(H_\ell)) \le k+1$. The base case where $k = 2d-1$ follows from Corollary \ref{cor_reg_powers_path} and the fact that  
    $$H_{2d-1} = G \backslash \{ n-d, \ldots, n-1, 1, \ldots, d\} \cong P_{n- (2d+1)}^d \cup \{n\},$$
    where $n$ is an isolated point. For the induction step, by applying Lemma \ref{lem_drop_v}, we need to prove that $\reg (I ( (H_{2j} \backslash N_{H_{2j}}[j+1]) ) ) \le k$ and $\reg ( I ( H_{2j-1} \backslash N_{H_{2j-1}} [n-d+j-1] ) ) \le k.$ 
    By definition, we have
    \begin{align*}
        H_{2j} \backslash N_{H_{2j}}[j+1] & = G \backslash (\{ n-d, \ldots, n-d+j-1\} \cup N_G[j+1]),\\
        H_{2j-1} \backslash N_{H_{2j-1}} [n-d+j-1] & = G \backslash ( \{1, \ldots, j\} \cup N_G[n-d+j-1]). 
    \end{align*}
    For simplicity, we denote by $F_{2j} = H_{2j} \backslash N_{H_{2j}} [j+1]$ and $F_{2j-1} = H_{2j-1} \backslash N_{H_{2j-1}}[n-d+j-1]$. We have that $F_{2j}$ is the induced subgraph of $G$ on $\{j+d+2, \ldots, n-d-1\} \cup \{n-d+j\}$. In particular, $F_{2j}$ is chordal. Hence, it suffices to prove that $\im(F_{2j}) \le k-1$. Let $\M$ be an induced matching of $F_{2j}$. If $n-d+j \notin \M$ then $\M$ is an induced matching of $P_{n - j - 2(d+1)}^d$ and the conclusion follows from Lemma \ref{lem_im_powers_paths}. If $n-d+j \in \M$ then dropping the edge containing $n-d+j$ from $\M$ we get an induced matching of $P_{n - j - 3(d+1)}$ and the conclusion follows from Lemma \ref{lem_im_powers_paths}. The conclusion that $\im(F_{2j-1}) \le k-1$ can be done similarly.    
\end{proof}
\begin{lem}\label{lem_s_4} Assume that $n = k(d+2) + d+1$ where $k \ge 2$. With the notations above, we have 
$$\reg (L) \le k + 1.$$    
\end{lem}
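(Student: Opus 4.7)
The plan is to bound $\reg(L)$ by iteratively applying Lemma \ref{lem_intersection} to the natural decomposition $L = \sum_{j \in S} L_j$, where $S = \{3, \ldots, d+1\} \cup \{n-d+1, \ldots, n\}$ and $L_j := x_j(A_j + B_j)$. The first step is the crucial observation that for every $j \in S$ the supports of $A_j$ and $B_j$ are disjoint, since $\supp(A_j) \subseteq N_G(j)$ while $\supp(B_j) \subseteq V(G) \setminus N_G[j]$. This lets me apply the additivity of regularity for ideals in disjoint variable sets together with Corollary \ref{cor_reg_powers_path} (since $B_j$ is the edge ideal of an induced sub-path-power on approximately $(k-1)(d+2)$ vertices) to conclude $\reg(R/(A_j + B_j)) = 0 + (k-1) = k-1$, hence $\reg(L_j) = k+1$ by Lemma \ref{lem_mul_x}.

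Next I will enumerate $S = \{j_1, \ldots, j_{2d-1}\}$ with a carefully chosen ordering---say, processing the left half $d+1, d, \ldots, 3$ first and then the right half $n-d+1, \ldots, n$---and set $L^{(i)} = \sum_{\ell \le i} L_{j_\ell}$ with $L^{(0)} = 0$. Lemma \ref{lem_intersection} gives
$$\reg(L^{(i)}) \le \max\bigl\{\reg(L^{(i-1)}),\ k+1,\ \reg(L^{(i-1)} \cap L_{j_i}) - 1\bigr\},$$
so induction on $i$ reduces the lemma to proving $\reg(L^{(i-1)} \cap L_{j_i}) \le k+2$ at every step. Since all minimal generators of $L$ are squarefree,
$$L^{(i-1)} \cap L_{j_i} = x_{j_i}\bigl((L^{(i-1)}:x_{j_i}) \cap (A_{j_i} + B_{j_i})\bigr),$$
and it remains to bound $\reg\bigl((L^{(i-1)}:x_{j_i}) \cap (A_{j_i} + B_{j_i})\bigr) \le k+1$.

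The colon $L^{(i-1)}:x_{j_i}$ will simplify dramatically: for each $\ell < i$, the factor $L_{j_\ell}:x_{j_i}$ equals the principal ideal $(x_{j_\ell})$ when $j_i \in N_G(j_\ell) \setminus \{1, 2\}$ (because then $x_{j_i} \in A_{j_\ell}$) and equals $L_{j_\ell}$ when $j_i$ is far in $G$ from both $1$ and $j_\ell$ (so $x_{j_i}$ divides neither $A_{j_\ell}$ nor $B_{j_\ell}$). After intersecting with $A_{j_i} + B_{j_i}$ and exploiting its disjoint-support structure, the resulting ideal collapses into an edge ideal of an induced sub-path-power of $C_n^d$ together with some linear forms, whose regularity can be controlled via Corollary \ref{cor_reg_powers_path} and Lemma \ref{lem_im_powers_paths}.

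The hard part will be the combinatorial bookkeeping required to track the intersection $(L^{(i-1)} : x_{j_i}) \cap (A_{j_i} + B_{j_i})$ at each step, especially when $j_i$ lies in the same half of $S$ as multiple earlier $j_\ell$'s and their $G$-neighborhoods overlap substantially. Choosing the ordering so that vertices from the same half are processed consecutively keeps the sub-path-powers that appear after the collapse of length close to $(k-1)(d+2)$, so that their regularity remains at most $k$, which is just enough to keep the induction closed and yield $\reg(L) \le k+1$.
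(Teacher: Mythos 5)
Your setup is sound as far as it goes: the decomposition $L=\sum_{j\in S}L_j$ with $L_j=x_j(A_j+B_j)$, the computation $\reg(L_j)=k+1$ via disjointness of $\supp(A_j)$ and $\supp(B_j)$, and the dichotomy for the colons ($L_{j_\ell}:x_{j_i}=(x_{j_\ell})$ if $j_\ell\in N_G(j_i)$, and $L_{j_\ell}:x_{j_i}=L_{j_\ell}$ otherwise --- though the correct reason in the second case is that $j_i\in N_G(1)\subseteq N_G[\{1,j_\ell\}]$ forces $x_{j_i}\notin\supp(B_{j_\ell})$, not that $j_i$ is ``far from $1$''). This is genuinely different from the paper, which uses its strategy (2): it successively adds the variables $x_{n-d+1},x_3,x_{n-d+2},x_4,\ldots$ to $L$, so that each step requires only a single colon ideal $L_{\ell}:x$, and each such colon is visibly a linear ideal plus the edge ideal of one induced subgraph of a path power, handled by the chordality-and-induced-matching argument of Lemma \ref{lem_s_3}.

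The gap is exactly at the step you defer as ``the hard part,'' and I do not believe it closes as described. After your reductions you must bound
$$(L^{(i-1)}:x_{j_i})\cap(A_{j_i}+B_{j_i})=\sum_{j_\ell\in N_G(j_i)}(x_{j_\ell})+\sum_{j_\ell\notin N_G(j_i)}x_{j_\ell}\bigl((A_{j_\ell}+B_{j_\ell})\cap(A_{j_i}+B_{j_i})\bigr),$$
and the cross terms in the second sum do not ``collapse into an edge ideal of an induced sub-path-power together with some linear forms.'' Take $d\ge 2$, $j_\ell=3$ and $j_i=n-d+1$ (a non-adjacent pair that exists for every admissible $n$): the generator $x_3x_4$ of $L_3$ satisfies $x_4\notin A_{n-d+1}$ (since $4\notin N_G(n-d+1)$) and $x_3x_4\notin B_{n-d+1}$ (since $3\in N_G(1)$), so $(x_3x_4)\cap(A_{n-d+1}+B_{n-d+1})$ contributes minimal generators of the form $x_3x_4x_m$ with $m\in N_G(n-d+1)$ and $x_3x_4x_ux_v$ with $x_ux_v\in B_{n-d+1}$. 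These degree-three and degree-four generators have no graph-theoretic interpretation of the kind Corollary \ref{cor_reg_powers_path} or Lemma \ref{lem_im_powers_paths} can handle, and no ordering of $S$ avoids them, because the two halves of $S$ always contain mutually non-adjacent vertices. The containment phenomenon that makes the analogous computation work in Lemma \ref{lem_s_5} is special to $M=A_2+B_2$ (vertex $2$ is adjacent to all of $S$ except $n-d+1$) and does not transfer to general pairs $j_\ell,j_i\in S$. So the required bound $\reg(L^{(i-1)}\cap L_{j_i})\le k+2$ remains an unproved assertion that carries essentially the entire difficulty of the lemma; to repair the argument you would either need to compute these intersections explicitly (as the paper does for $L\cap M$) or switch to the variable-adding scheme the paper actually uses.
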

\begin{proof} We define the following ideals 
\begin{align*}
    L_1 & = L\\
    L_{2j} & = L_{2j-1} + (x_{n-d+j}) = L + (x_{n-d+1}, \ldots, x_{n-d+j}, x_3, \ldots, x_{j+1}),\\
    L_{2j+1} & = L_{2j} + (x_{j+2}) = L + (x_{n-d+1},\ldots,x_{n-d+j},x_3,\ldots,x_{j+2}). 
\end{align*}
We will prove by downward induction that $\reg (L_\ell) \le k+1$ for $1 \le \ell \le 2d-1$. For the base case where $\ell = 2d-1$, we have 
$$L_{2d-1} = x_n I (G \backslash N_G[\{1,n\}]).$$
Hence, $\reg (L_{2d-1}) = 1 + \reg (P_{n-2(d+1)}^d) = k+1$. For the induction step, by induction, it suffices to prove that $\reg (L_{2j-1} : x_{n-d+j}) \le k$ and $\reg (L_{2j} : x_{j+2}) \le k$. We have 
\begin{align*}
L_{2j-1} : x_{n-d+j} & = A_{n-d+j} + B_{n-d+j} + (x_{n-d+1},\ldots,x_{n-d+j-1},x_3,\ldots,x_{j+1})\\
L_{2j} : x_{j+2} & = A_{j+2} + B_{j+2} + (x_{n-d+1},\ldots,x_{n-d+j},x_3,\ldots,x_{j+1}).    
\end{align*}
In other words, they correspond to the edge ideals of some induced subgraphs of powers of paths. A similar argument to that of the proof of Lemma \ref{lem_s_3} applies to deduce our conclusion.
\end{proof}

\begin{lem}\label{lem_s_5} With the notations above, we have 
\begin{align*}
    L \cap M &= \sum_{j=n-d+2}^n x_j (A_j + B_j) + \sum_{j=3}^{d+1} x_j (A_j + B_j) \\
     & + x_{n-d+1} (( x_{n-d+2},\ldots,x_n)+  x_{d+2} (x_{n-2d+1}, \ldots,x_{n-d}, x_{d+3},\ldots,x_{2d+2}) + B_2).
\end{align*}    
\end{lem}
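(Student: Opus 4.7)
The strategy is to distribute the intersection over the sum defining $L$. By Lemma \ref{lem_colon_sum}(2),
$$L \cap M = \sum_{j \in S} \bigl( x_j(A_j+B_j) \cap M \bigr),$$
with $S = \{3,\ldots,d+1\} \cup \{n-d+1,\ldots,n\}$. For every $j \in S \setminus \{n-d+1\}$ the index lies in $N_G(2) \setminus \{1\}$, so $x_j$ is a minimal generator of $A_2 \subset M$; hence $x_j(A_j+B_j) \subset A_2 \subset M$ and the intersection with $M$ equals $x_j(A_j+B_j)$. These two ranges of indices produce the first two sums on the right-hand side.

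The essential case is $j = n-d+1$. Because $\dist_{C_n}(2,\, n-d+1) = d+1 > d$, the variable $x_{n-d+1}$ is not a generator of $A_2$; it is also absent from the support $\{d+3,\ldots,n-d\}$ of $B_2$. Consequently no minimal generator of $M$ is divisible by $x_{n-d+1}$, so $M : x_{n-d+1} = M$, and by Lemma \ref{lem_colon_sum}(4),
$$x_{n-d+1}(A_{n-d+1}+B_{n-d+1}) \cap M \;=\; x_{n-d+1} \bigl( (A_{n-d+1}+B_{n-d+1}) \cap M \bigr).$$
It remains to identify the inner factor with $(x_{n-d+2},\ldots,x_n) + x_{d+2}(x_{n-2d+1},\ldots,x_{n-d},x_{d+3},\ldots,x_{2d+2}) + B_2$.

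For the $\supseteq$ direction I would verify containment summand by summand: the linear part consists of the common linear generators of $A_{n-d+1}$ and $A_2$; for the middle summand, $x_{d+2} \in A_2$ and each listed $x_\ell$ either belongs to $A_{n-d+1}$ (when $\ell \in \{n-2d+1,\ldots,n-d\}$) or satisfies that $\{d+2,\ell\}$ is an edge of $G$ with both endpoints in $\{d+2,\ldots,n-2d\}$, placing $x_{d+2}x_\ell$ in $B_{n-d+1}$; and every edge generator of $B_2$, whose endpoints lie in $\{d+3,\ldots,n-d\}$, either has an endpoint in $\{n-2d+1,\ldots,n-d\}$ (hence in $A_{n-d+1}$) or has both endpoints in $\{d+3,\ldots,n-2d\} \subset \{d+2,\ldots,n-2d\}$ (hence in $B_{n-d+1}$). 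For the $\subseteq$ direction I would expand $(A_{n-d+1}+B_{n-d+1}) \cap (A_2+B_2)$ into its four cross-intersections via Lemma \ref{lem_colon_sum}(2) and analyze the lcm's of pairs of generators. The main obstacle is the combinatorial bookkeeping: ensuring that each cross-lcm, particularly those involving an edge generator of $B_{n-d+1}$ paired with a generator of $A_2$ or of $B_2$, reduces to a multiple of one of the three listed summands. The specific structure of $C_n^d$, namely that every vertex within distance $d$ of $2$ lies in $\{3,\ldots,d+2\} \cup \{n-d+2,\ldots,n\}$, together with the arithmetic constraint $n = k(d+2) + d+1$, is what forces the desired collapse.
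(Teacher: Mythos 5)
Your treatment of the summands with $j \neq n-d+1$ is correct and matches the paper, and the reduction $x_{n-d+1}(A_{n-d+1}+B_{n-d+1}) \cap M = x_{n-d+1}\bigl((A_{n-d+1}+B_{n-d+1}) \cap M\bigr)$ via $M : x_{n-d+1} = M$ is valid, as is your verification of $\supseteq$ for the inner factor. The gap is precisely the direction you defer: the inclusion
$$(A_{n-d+1}+B_{n-d+1}) \cap (A_2+B_2) \subseteq (x_{n-d+2},\ldots,x_n) + x_{d+2}(x_{n-2d+1},\ldots,x_{n-d},x_{d+3},\ldots,x_{2d+2}) + B_2$$
is not bookkeeping that will go through --- it is false. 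Take a generator $x_j$ of $A_2$ with $j \in \{3,\ldots,d+1\}$ and a generator $x_\ell$ of $A_{n-d+1}$ with $\ell \in \{n-2d+1,\ldots,n-d\}$. The cross-lcm $x_jx_\ell$ lies in the left-hand side, but it is divisible by none of $x_{n-d+2},\ldots,x_n$, nor by $x_{d+2}$ (since $j < d+2 < \ell$), nor by an edge generator of $B_2$ (whose support avoids $j \le d+1$). Concretely, for $d=2$, $n=11$: $x_3 \in A_2$ and $x_8 \in A_{10}$, so $x_3x_8 \in (A_{10}+B_{10}) \cap M$, yet $x_3x_8 \notin (x_{11}) + x_4(x_5,x_6,x_8,x_9) + B_2$. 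Your per-term identity therefore fails, and the proof does not close.

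Note that the paper does not assert your per-term identity. It intersects $x_{n-d+1}C$ (with $C = A_{n-d+1}+B_{n-d+1}$) against each generator of $M$ separately and disposes of exactly these cross-terms by claiming $x_{n-d+1}C \cap (x_j) \subseteq x_j(A_j+B_j)$ for $j=3,\ldots,d+1$, i.e., by absorbing them into the \emph{other} summands of $L$ rather than into the $x_{n-d+1}$ term; your formulation, which evaluates each $x_j(A_j+B_j)\cap M$ in isolation, forecloses that absorption. Be warned, though, that the absorption itself needs $x_{n-d+1}A_{n-d+1} \subseteq A_j+B_j$, and in the same example $x_{10}x_8 \notin A_3+B_3 = (x_4,x_5)+I(G[6,\ldots,9])$, so the monomial $x_3x_8x_{10}$ appears to lie in $L\cap M$ but in none of the summands of the displayed right-hand side. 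Any repair of your argument must first settle where such cross-terms are supposed to land.
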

\begin{proof} By Lemma \ref{lem_s_1}, we have 
\begin{align*}
     L & = \sum_{j=3}^{d+1} x_j(A_j + B_j) + \sum_{j=n-d+1}^n x_j(A_{j} +B_{j}), \\
    M &= A_2 + B_2.
\end{align*}
Since $x_j(A_j + B_j)  \subseteq A_2$ for $j =3, \ldots, d+1$ and $j = n-d+2, \ldots, n$, we deduce that 
\begin{equation}\label{eq_in_1}
    x_j(A_j + B_j) \cap M = x_j(A_j + B_j)
\end{equation}
for $j = 3, \ldots, d+1$ and $j = n-d+2, \ldots, n$. We need to compute $x_{n-d+1} (A_{n-d+1} + B_{n-d+1}) \cap (A_2 + B_2)$. Note that $A_2 = (x_3, \ldots, x_{d+2}, x_{n-d+2}, \ldots, x_n)$ and $B_2 = G[d+3,\ldots,n-d]$. We denote by 
$$C = A_{n-d+1} + B_{n-d+1} = (x_{n-2d+1}, \ldots,x_{n-d},x_{n-d+2},\ldots,x_n) + I(G[d+2,\ldots,n-2d]).$$
For $j = 3,\ldots, d+1$, we have $x_{n-d+1} C \subseteq A_j + B_j$, hence 
\begin{equation}\label{eq_in_2}
    x_{n-d+1}C \cap (x_j)   \subseteq x_j (A_j + B_j).
\end{equation}
For $j = d+2$, we have 
\begin{equation}\label{eq_in_3}
    \begin{split}
    x_{n-d+1} C \cap (x_{d+2})  = x_{d+2} x_{n-d+1} ( & (x_{n-2d+1}, \ldots,x_{n-d},x_{n-d+2},\ldots,x_n,x_{d+3},\ldots,x_{2d+2}) \\
    &+  I ( G[2d+3,\ldots,n-2d]) ).    
    \end{split}
\end{equation}
For $j = n-d+2, \ldots, n$, we have 
\begin{equation}\label{eq_in_4}
x_{n-d+1} C \cap (x_j)  = (x_{n-d+1} x_j).    
\end{equation}
Now,
\begin{align*}
    B_2 \subseteq G[d+3,\ldots,n-2d] + (x_{n-2d+1},\ldots,x_{n-d}) \subseteq C. 
\end{align*}
Hence, 
\begin{equation}\label{eq_in_5}
    x_{n-d+1} C \cap B_2 = x_{n-d+1} B_2.
\end{equation} 
Since $I(G[2d+3,\ldots,n-2d]) \subseteq B_2$, by Lemma \ref{lem_colon_sum} and Eq. \eqref{eq_in_1}-\eqref{eq_in_5}, we deduce that 
\begin{align*}
    L \cap M & = \sum_{j=3}^{d+1} x_j (A_j + B_j) + \sum_{j=n-d+2}^n x_j (A_j + B_j) \\
    & + x_{n-d+1} ( x_{d+2} (x_{n-2d+1}, \ldots,x_{n-d}, x_{d+3},\ldots,x_{2d+2})) \\
    & + x_{n-d+1} ( x_{n-d+2},\ldots,x_n) + x_{n-d+1} B_2.
\end{align*}
The conclusion follows.
\end{proof}
\begin{lem}\label{lem_s_6} Assume that $n = k(d+2) + d+1$ where $k \ge 2$. With the notations above, we have $$\reg (L \cap M) \le k+ 1.$$    
\end{lem}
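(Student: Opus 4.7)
The plan is to follow strategy~(2) outlined in the introduction: iteratively apply Lemma~\ref{lem_drop_v} (in its monomial ideal form) to $L \cap M$, adding variables one at a time. By inspection of the generators of $L \cap M$ displayed in Lemma~\ref{lem_s_5}, every generator is divisible by one of the variables
\[
x_{n-d+1}, x_{n-d+2}, \ldots, x_n, x_3, x_4, \ldots, x_{d+1}.
\]
Adding these variables to $L \cap M$ in this order, the terminal ideal is just the monomial ideal generated by these variables, which has regularity $1 \le k+1$. It therefore suffices to bound each intermediate colon $\bigl((L \cap M) + (y_1,\ldots,y_{j-1})\bigr) : y_j$ by $k$.

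Using Lemma~\ref{lem_colon_sum}, each such colon decomposes as a sum of some already-added variables, linear generators of an $A_j$, and the edge ideal $B_j = I(G \setminus N_G[\{1,j\}])$, which is the edge ideal of an induced subgraph of $C_n^d$ supported on a union of consecutive arcs. After quotienting out the linear part, the computation reduces to the edge ideal of an induced subgraph of a power of a path, whose regularity can be controlled by an induced matching argument analogous to the one in the proofs of Lemmas~\ref{lem_s_3} and~\ref{lem_s_4}, combined with Lemma~\ref{lem_im_powers_paths}. The induced matching number of each relevant subgraph turns out to be at most $k-1$, which via Corollary~\ref{cor_reg_powers_path} delivers the required bound of $k$.

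The main obstacle is the first colon, $(L \cap M) : x_{n-d+1}$, since $x_{n-d+1}$ appears nontrivially in the mixed summand from Lemma~\ref{lem_s_5}. A direct calculation using Lemma~\ref{lem_colon_sum} yields
\[
(L \cap M) : x_{n-d+1} = (x_{n-d+2}, \ldots, x_n) + x_{d+2}(x_{n-2d+1}, \ldots, x_{n-d}, x_{d+3}, \ldots, x_{2d+2}) + B_2 + \sum_{j=3}^{d+1} x_j(A_j + B_j),
\]
in which the $x_{d+2}$-summand is the delicate piece. Since $n - 2(d+1) = (k-1)(d+2) + 1$, Corollary~\ref{cor_reg_powers_path} gives $\reg(R/B_2) = k-1$. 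A slightly more elaborate induced matching analysis on the associated subgraph, accounting for the additional edges introduced by the $x_{d+2}$-term, then bounds this colon's regularity by $k$, after which Lemma~\ref{lem_drop_v} yields $\reg(L \cap M) \le k+1$.
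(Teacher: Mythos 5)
Your overall plan (strategy (2): peel off the variables that support every generator of $L\cap M$) is the right family of ideas, and your computation of the first colon $(L\cap M):x_{n-d+1}$ is correct. However, the order in which you add the variables is not a cosmetic choice, and with your order --- all of $x_{n-d+1},\ldots,x_n$ before any of $x_3,\ldots,x_{d+1}$ --- the required bound on the intermediate colons is actually false. Consider the second colon $\bigl((L\cap M)+(x_{n-d+1})\bigr):x_{n-d+2}=\bigl(L+(x_{n-d+1})\bigr):x_{n-d+2}$. The summands $x_j(A_j+B_j)$ for $j=3,\ldots,d+1$ are untouched by this colon, since neither $x_{n-d+1}$ nor $x_{n-d+2}$ lies in their supports; so they survive in full, and your description of each colon as ``already-added variables plus $A_j$ plus $B_j$'' omits them. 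Modulo the linear part of the colon, the quadrics $x_jx_\ell$ coming from these surviving summands, together with $B_{n-d+2}=I(G[d+2,\ldots,n-2d+1])$, assemble into the edge ideal of the $d$-th power of a path on the interval $\{3,\ldots,n-2d+1\}$, which has $n-2d-1=(k-1)(d+2)+2$ vertices. By Lemma \ref{lem_im_powers_paths} its induced matching number is $k$, so this colon has regularity exactly $k+1$, not at most $k$, and Lemma \ref{lem_drop_v} then only yields $\reg(L\cap M)\le k+2$.

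This is precisely why the paper's Lemma \ref{lem_s_4} interleaves the two ends, adding $x_{j+1}$ from the left before taking the colon by $x_{n-d+j}$ from the right: the extra deleted vertex shortens the surviving path power to $(k-1)(d+2)+1$ vertices, whose induced matching number is $k-1$, and that is what delivers the bound $k$ for each colon. The paper's proof of the present lemma also avoids redoing the whole chain: it observes that $(L\cap M)+(x_{n-d+1})=L+(x_{n-d+1})$, so everything past the first step is already covered by Lemma \ref{lem_s_4}, and only the single new colon $(L\cap M):x_{n-d+1}$ requires fresh analysis --- your ``delicate piece.'' Note also that this first colon contains irredundant cubic generators coming from the surviving $x_jB_j$ (for instance $x_jx_ux_v$ with $v\in\{n-2d+1,\ldots,n-d\}$), so it is not literally a linear ideal plus an edge ideal, and an induced-matching argument cannot be applied to it without a further reduction. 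To repair your argument, switch to the interleaved order and reuse Lemma \ref{lem_s_4} for every step after the first.
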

\begin{proof} For simplicity of notation, we denote by $P = L \cap M$. 
We note that $P + (x_{n-d+1}) = L + (x_{n-d+1})$. Hence, the arguments in the proof of Lemma \ref{lem_s_4} carry over except for the first step. It remains to prove that $\reg (P : x_{n-d+1}) \le k$. We have 
$$P : x_{n-d+1} = (x_{n-d+2},\ldots,x_n) + I(T)$$
where $T$ is the graph on $\{d+2, \ldots, n-d\}$ which is the union of the $d$-th power of the path from $d+3$ to $n-d$ and the edges $\{d+2, j\}$ with $j = n-2d+1, \ldots, n-d, d+3, \ldots, 2d+2$. It is easy to see that $T$ is chordal. It suffices to prove that $\im(T) \le k- 1$. The arguments as in the proof of Lemma \ref{lem_s_3} carry over to give the desired conclusion.
\end{proof}

\begin{lem}\label{lem_critical_case} Assume that $n = k(d+2) + d+1$ where $k \ge 2$. Then 
$$\reg (R/I(C_n^d)) = k.$$    
\end{lem}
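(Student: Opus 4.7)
The plan is to combine Lemmas~\ref{lem_s_1}--\ref{lem_s_6} via two nested applications of Lemma~\ref{lem_intersection}. First, the lower bound $\reg(R/I(C_n^d)) \ge k$ follows at once from \cite[Corollary 3.4]{Ro} together with \cite[Lemma 2.2]{K}, since $\im(C_n^d) = \lfloor n/(d+2) \rfloor = k$ under our numerical assumption. All remaining work is devoted to the matching upper bound $\reg(I) \le k+1$, where $I = I(C_n^d)$.

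The key structural observation I would pin down is that the splittings $I = J + x_1 K$ and $J \cap K = L + x_2 M$ from Lemma~\ref{lem_s_1} are variable-disjoint in exactly the right way. Since $J = I(G \setminus \{1\})$, the variable $x_1$ does not appear in $\supp(J)$, which forces $J \cap (x_1 K) = x_1(J \cap K)$ by Lemma~\ref{lem_colon_sum}. Similarly, every generator of $L$ factors through some $x_j$ with $j \in \{3,\ldots,d+1\} \cup \{n-d+1,\ldots,n\}$ times a generator of $A_j + B_j$, and neither $A_j$ (which omits $x_2$ by definition) nor $B_j = I(G \setminus N_G[\{1,j\}])$ (which omits $x_2$ since $2 \in N_G(1)$) contains $x_2$; hence $x_2 \notin \supp(L)$ and $L \cap (x_2 M) = x_2(L \cap M)$. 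Lemma~\ref{lem_mul_x} then promotes the regularity of each of these intersections by exactly $1$.

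With this in hand, I would first estimate $\reg(J \cap K) = \reg(L + x_2 M)$ by a direct application of Lemma~\ref{lem_intersection}:
$$\reg(L + x_2 M) \le \max\bigl\{\reg(L),\ \reg(x_2 M),\ \reg(x_2(L \cap M)) - 1\bigr\} \le k + 1,$$
where Lemma~\ref{lem_s_4} handles $\reg(L)$, Lemma~\ref{lem_s_2} gives $\reg(x_2 M) = \reg(M) + 1 = k+1$, and Lemma~\ref{lem_s_6} gives $\reg(x_2(L \cap M)) = \reg(L \cap M) + 1 \le k+2$. A second application of Lemma~\ref{lem_intersection} to $I = J + x_1 K$ then yields
$$\reg(I) \le \max\bigl\{\reg(J),\ \reg(x_1 K),\ \reg(x_1(J \cap K)) - 1\bigr\} \le \max\{k+1,\ 2,\ k+1\} = k+1,$$
using Lemma~\ref{lem_s_3} for $J$ and the fact that $K$ is generated by $2d$ variables, so $\reg(K) = 1$. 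Hence $\reg(R/I(C_n^d)) \le k$, which matches the lower bound.

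The heavy lifting has already been absorbed into the preceding lemmas: the real work is in controlling $L$, $L \cap M$, and $J$ individually. The only genuinely new step in this final assembly is the support verification $x_1 \notin \supp(J)$ and $x_2 \notin \supp(L)$ that makes the two intersections factor cleanly as a single variable times the inner intersection; after that, the two applications of Lemma~\ref{lem_intersection} are mechanical bookkeeping.
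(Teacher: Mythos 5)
Your proposal is correct and follows essentially the same route as the paper: both establish the lower bound via the induced matching number and obtain the upper bound by two nested applications of Lemma~\ref{lem_intersection} to the decompositions $I = J + x_1K$ and $J \cap K = L + x_2M$, invoking Lemmas~\ref{lem_s_2}--\ref{lem_s_6} for the individual pieces. Your explicit verification that $x_1 \notin \supp(J)$ and $x_2 \notin \supp(L)$, so that the intersections factor as $x_1(J\cap K)$ and $x_2(L\cap M)$, spells out a step the paper leaves implicit, but it is the same argument.
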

\begin{proof} We use the notations as above. First, we prove that 
\begin{equation}\label{eq_3_10}
    \reg (J \cap K) \le k+1.
\end{equation} 
Indeed, by Lemma \ref{lem_intersection}, we have 
$$\reg (J \cap K) \le \max \{ \reg (L), \reg (M) + 1, \reg (L \cap M) \}.$$
The conclusion follows from Lemma \ref{lem_s_2}, Lemma \ref{lem_s_4}, and Lemma \ref{lem_s_6}.

Now, applying Lemma \ref{lem_intersection} we have 
$$ \reg (I) \le \max \{ \reg (J), \reg (K) + 1, \reg (J \cap K)\}.$$
Since $K$ is a linear ideal, $\reg (K) = 1$. The conclusion follows from Lemma \ref{lem_s_3} and Eq. \eqref{eq_3_10}.
\end{proof}

\begin{proof}[Proof of Theorem \ref{thm_reg_main}] For simplicity of notation, we denote by $G = C_n^d$. First, assume that $n \le 2d+2$. By \cite[Theorem 2.1]{Ro} and \cite[Corollary 3.4]{Ro}, $G$ is chordal and $\im(G) = 1$. By \cite[Theorem 1]{F}, $\reg (S/I(G)) = 1$. The case where $n = 2d+3$ follows from Lemma \ref{lem1}.

Now, assume that $n > 2d+3$. The conclusion follows from Lemma \ref{lem2} and Lemma \ref{lem_critical_case}.
\end{proof}


  \section*{Declarations}
  
  
  \subsection*{\bf Ethical approval}\ \\[-0.3cm]
  
  \noindent Not applicable.
  
  
  \subsection*{\bf Competing interests}\ \\[-0.3cm]
  
  \noindent The authors declare that there are no conflicts of interest.
  
  
  \subsection*{\bf Authors' contributions}\ \\[-0.3cm]
  
  \noindent All authors have contributed equally to this work.
  

  
  \subsection*{\bf Availability of data and materials}\ \\[-0.3cm]
  
  \noindent Data sharing is not applicable to this work, as no data sets were generated or analyzed during the current study.

\end{document}